\def\cl{\centerline}
\def\G{\mathcal{G}}
\def\b{\beta}
\def\vs{\vspace*}
\def\L{\mathcal{L}}
\def\T{\mathcal{T}}
\def\V{\mathcal{V}}
\def\M{\mathcal{M}}
\def\Z{\mathbb{Z}}
\def\N{\mathbb{N}}
\def\b{\mathfrak{b}}
\def\G{\mathcal{G}}
\def\C{\mathbb{C}}
\def\p{\mathfrak{p}}
\def\ni{\noindent}
\numberwithin{equation}{section}
\newtheorem{theo}{Theorem}[section]
\newtheorem{defi}[theo]{Definition}
\newtheorem{coro}[theo]{Corollary}
\newtheorem{lemm}[theo]{Lemma}
\newtheorem{prop}[theo]{Proposition}
\newtheorem{clai}{Claim}
\newtheorem{rema}[theo]{Remark}
\newtheorem{remark}[theo]{Remark}
\begin{document}
	\begin{center}
		\cl{\large\bf \vs{8pt}  Restricted representations  of the twisted }
		\cl{\large\bf \vs{8pt}    $N=2$    superconformal algebra}
		\cl{ Haibo Chen, Yucai Su and Yukun Xiao}
	\end{center}
	
	{\small
		\parskip .005 truein
		\baselineskip 3pt \lineskip 3pt
		
		\noindent{{\bf Abstract:}
			In this paper, we  construct a large class of new simple modules over the twisted $N=2$ superconformal algebra. These new simple    modules
			are  restricted   modules constructed from the simple
			modules over certain finite-dimensional solvable Lie superalgebras,  including  various versions of Whittaker modules.  We elaborate that they are also  twisted modules for the universal $N=2$ superconformal vertex (operator) algebra.   On the other hand, we give an explicit characterization of the simple restricted modules over the twisted $N=2$ superconformal algebra $\mathcal{T}$   on which $T_t$ in $\mathcal{T}$ acts injectively for some $t\in \frac{1}{2}+\Z_+$.
			\vs{5pt}
			
			\ni{\bf Key words:}
			Twisted  $N=2$  superconformal algebra,  restricted module,  twisted module.}
		
		\ni{\it Mathematics Subject Classification (2020):} 17B10, 17B65, 17B68, 17B69.}
	\parskip .001 truein\baselineskip 6pt \lineskip 6pt
	%\tableofcontents
	\section{Introduction}
	The $N=2$ superconformal algebras were constructed independently by Kac  and    Ademollo et
	al. in  \cite{K1,Ael},
	with four sectors,  called the Neveu-Schwarz sector,
	the Ramond sector, the topological sector  and the twisted sector.
	In fact, the first three untwisted sectors are isomorphic to each other by ``spectral flows" and  ``topological twists" (see \cite{DVV, SS}). Essentially, there are  two classes of different
	sectors which we called untwisted and twisted $N=2$ superconfromal algebra.
	
	The   representation theory  of the $N=2$ superconformal algebras has  been   extensively investigated  by mathematicians.  The weight modules  over the untwisted  $N = 2$ superconformal algebra have been studied in many papers such as \cite{D,FJS,I,LSZ,ST}. Recently, some non-weight modules related to the $U(\mathfrak{h})$-free modules, Whittaker modules and $D$-modules over the   untwisted $N=2$ superconformal algebras were studied in \cite{YYX2,LLD,CLP}. As for the twisted $N=2$ superconformal algebra, its $U(\mathfrak{h})$-free modules, Verma modules  and  indecomposable modules of the intermediate series were respectively investigated  in \cite{CDL2,IK,DG,LSZ}.
	
	In this paper, we mainly care about the restricted modules over the twisted $N=2$ superconformal algebra with a vertex algebraic motivation.  As we know,  there are close connections between the representations of  Lie algebras and those of related vertex (operator) algebras with the same central charge (see \cite{LL,L,L2}). For classical examples, the restricted modules over the Virasoro algebra are exactly the weak modules over the Virasoro vertex operator algebras;  the restricted modules over the twisted affine Lie algebras are identical to the weak twisted module over  a vertex operator algebra associated to the corresponding untwisted affine Lie algebras; the restricted modules for the Ramond algebra are  weak twisted modules for the Neveu-Schwarz vertex operator superalgebra.  In particular, for each central charge $c$, there is a vertex (operator) superalgebra associated to the untwisted $N=2$ superconformal algebra, called the universal $N=2$ superconformal vertex (operator) algebra (see \cite{A}). We will illustrate the fact that the  (weak) $\psi$-twisted modules for universal   $N=2$ superconformal vertex (operator) algebra can be identified with the restricted modules for the twisted $N=2$ superconformal algebra (see Proposition \ref{VOA-version}), where $\psi$ is a specific automorphism of order 2. That is why we  study the twisted $N=2$ superconformal algebra and its restricted representations.

	Whittaker modules  as an important class of restricted modules, were first   introduced  for the finite dimensional Lie algebra $\mathfrak{sl}(2)$      in \cite{AP}.
	At the same time, Whittaker modules over the finite dimensional complex semisimple Lie algebras were defined in \cite{K1}.  After that, they were widely studied over some other objects, such as affine Lie algebra, infinite dimensional Lie (super)algebras and so on (see,  e.g. \cite{ALZ,C,LWZ,LLD,LZ3,LGZ}). On the other hand, Whittaker  modules are also investigated in the framework of vertex algebra theory in \cite{ALZ,ALTY1,ALTY}, which could provide irreducible modules for  orbifolds for corresponding vertex algebras.

	Highest weight modules and Whittaker modules for the  Virasoro algebra  share some same property: the actions of elements in the positive part   are locally finite (see \cite{MZ1}).
	In \cite{MZ}, Mazorchuk and Zhao  gave    a generalized construction of simple Virasoro modules, which  include highest weight modules, Whittaker modules and high order
	Whittaker modules.
	Moreover, the simple restricted modules   over some other infinite dimensional Lie (super)algebras have been investigated (see, e.g. \cite{CG,G,GG,CHS,LPX55,LPXZ,CGHS,TYZ}).
	The fact is,   for the super case,  there are a lot of   differences and new features (see, e.g. \cite{LS,LPX55,S,C}), which we are mainly interested in.
	So in the present paper, we will obtain and characterize a large class of  new simple non-weight  (restricted) modules   over the twisted  $N=2$ superconformal algebra.

	The rest of the present paper is organized as follows.
	In Section $2$, we  recall  some definitions  and propositions related to Lie superalgebra and vertex operator superalgebra. In Section $3$, two non-isomorphic sectors of the $N=2$ superconformal algebra are presented. We show that the restricted modules for the twisted $N=2$ superconformal algebra correspond to the  weak $\psi$-twisted modules for the universal $N=2$ superconformal vertex operator superalgebra.
	In Section $4$, a class of new simple  restricted modules over the  twisted  $N=2$  superconformal algebra $\T$  are constructed (see Theorem \ref{th1}).
	In Section $5$, we present a characterization of simple restricted $\T$-modules under certain conditions  in Theorem \ref{th2}, which reduces the problem of classification of simple restricted
	$\T$-modules to classification of simple modules over a class of finite-dimensional solvable Lie superalgebras.
	In Section $6$, we show  some examples of restricted $\T$-modules such as generalized Whittaker
	modules   and high order Whittaker modules.

	Throughout this paper,  we denote by $\C$,  $\Z$, $\N$ and $\Z_+$ the sets of complex numbers,  integers,
	nonnegative integers and positive integers, respectively.
	Note that $\frac{1}{2}\Z=\Z\bigcup(\frac{1}{2}+\Z)$. All vector superspaces
	(resp. superalgebras, supermodules)   and
	spaces (resp. algebras, modules)    are considered   over
	$\C$. We use $U(\G)$ to denote
	the universal enveloping algebra for a Lie (super)algebra $\G$.
	We also denote by
	$\delta_{i,j}$ the Kronecker symbol and $\mathfrak{i}=\sqrt{-1}$ the imaginary unit.

	\section{Preliminaries}
	In this section,   some notations and definitions related  to Lie superalgebras, vertex superalgebras and their representations are presented (c.f. \cite{CW,K1, L, L2}).

	Assume that  $W=W_{\bar0}\oplus W_{\bar1}$ is a $\Z_2$-graded vector space. Any element $w\in W_{\bar0}$
	is said to be  {\em even}  and any element $w\in W_{\bar1}$ is said to be  {\em odd}. Set $|w|=0$ if
	$w\in W_{\bar0}$ and $|w|=1$ if $w\in W_{\bar1}$. All elements  in  $W_{\bar0}$ or  $W_{\bar1}$ are called  {\em homogeneous}.
	Throughout this paper, all elements in Lie superalgebras and modules are homogenous,   all modules for Lie superalgebras    are $\Z_2$-graded and
	all simple modules are non-trivial  unless
	specified.
	
	Let $\mathcal{L}$ be a Lie superalgebra. An {\em $\mathcal{L}$-module} is a $\Z_2$-graded vector space $W$ together
	with a bilinear map $\mathcal{L}\times W\rightarrow W$, denoted $(x,w)\mapsto xw$ such that
	$$x(yw)-(-1)^{|x||y|}y(xw)=[x,y]w\quad
	\mathrm{and}\quad
	\mathcal{L}_{\bar i} W_{\bar j}\subseteq  W_{\bar i+\bar j},$$
	for all $x, y \in \mathcal{L}, w \in W$. Thus there is a parity-change functor $\Pi$ on the category of
	$\mathcal{L}$-modules to itself. That is to say, for any $\mathcal{L}$-module
	$W=W_{\bar0}\oplus W_{\bar1}$, we have a new module $\Pi(W)$ with the same underlining space with the
	parity exchanged, namely, $\Pi(W_{\bar 0})=W_{\bar 1}$ and $\Pi(W_{\bar 1})=W_{\bar0}$.
	\begin{defi}\rm
		Assume that  $P$  is a module of  $\L$ and $x\in\L$.
		
		\begin{itemize}
			\item[{\rm (i)}] If for any $v\in P$ there exists $m\in\Z_+$ such that $x^mv=0$,  then   the action of  $x$  on $P$ is called {\em locally nilpotent}.
			Similarly, the action of $\L$   on $P$  is called {\em locally nilpotent} if for any $v\in P$ there exists $m\in\Z_+$ such that $\L^mv=0$.
			\item[{\rm (ii)}]  If for any $v\in P$, we have  $\mathrm{dim}(\sum_{m\in\Z_+}\C x^mv)<+\infty$, then  the action of $x$ on $P$ is called {\em locally finite}.
			Similarly, the action of $\L$  on $P$  is called  {\em locally finite}
			if for any $v\in P$ we get $\mathrm{dim}(\sum_{m\in\Z_+} {\L}^mv)<+\infty$.
		\end{itemize}
	\end{defi}
	Clearly,   the action of $x$ on $P$ being locally nilpotent, implies that the action of $x$ on $P$ is locally finite.
	If $\L$ is a finitely generated Lie superalgebra,   the action of
	$\L$
	on $P$ is locally nilpotent, which implies that  the action of $\L$ on $P$ is locally finite.
	
	Now we turn our attention to  vertex (operator) superalgebras.
	\begin{defi}
		A {\it vertex superalgebra} is a quadruple
		$(V,{\bf 1},D,Y)$, where $V=V^{(0)}\oplus V^{(1)}$ is a $\Z_{2}$-graded
		vector space, $D$ is a $\Z_{2}$-endomorphism
		of $V$, ${\bf 1}$ is a specified element called the {\it vacuum} of $V$,
		and $Y$ is a linear map
		\begin{eqnarray}
			Y(\cdot,z):& &V\rightarrow ({\rm End}V)[[z,z^{-1}]];\nonumber\\
			& &a\mapsto Y(a,z)=\sum_{n\in \Z}a_{n}z^{-n-1}\;\;(\mbox{where }
			a_{n}\in {\rm End}V)
		\end{eqnarray}
		such that
		\begin{eqnarray*}
			{\rm (V1)}& &\mbox{For any }a,b\in V, a_{n}b=0\;\;\;\mbox{ for }n
			\mbox{ sufficiently large;}\\
			{\rm (V2)}& &[D,Y(a,z)]=Y(D(a),z)={d\over dz}Y(a,z)\;\;\mbox{  for any }a\in V;\\
			{\rm (V3)}& &Y({\bf 1},z)=Id_{V}\;\;\;\mbox{(the identity operator of $V$)};\\
			{\rm (V4)}& &Y(a,z){\bf 1}\in ({\rm End}V)[[z]] \mbox{ and }\lim_{z \rightarrow
				0}Y(a,z){\bf 1}=a\;\;\mbox{  for any }a\in V;\\
			{\rm (V5)}& &\mbox{For }\Bbb{Z}_{2}\mbox{-homogeneous }a,b\in V,
			\mbox{ the following {\it Jacobi identity} holds:}
		\end{eqnarray*}
		$$
		\begin{aligned}
			& z_0^{-1} \delta\left(\frac{z_1-z_2}{z_0}\right) Y\left(a, z_1\right) Y\left(b, z_2\right)- (-1)^{|a||b|}z_0^{-1} \delta\left(\frac{z_2-z_1}{-z_0}\right) Y\left(b, z_2\right) Y\left(a, z_1\right) \\
			= & z_2^{-1} \delta\left(\frac{z_1-z_0}{z_2}\right) Y\left(Y\left(a, z_0\right) b, z_2\right) .
		\end{aligned}
		$$
		A vertex superalgebra $V$ is called a vertex operator superalgebra if there is a Virasoro element  $\omega$ of $V$ such that
		\begin{eqnarray*}	
			&{\rm (V6)}&[L(m),L(n)]=(m-n)L(m+n)+\frac{m^{3}-m}{12}\delta_{m+n,0}({\rm rank}V)\\
			& &\mbox{for }m,n\in \Bbb{Z},\mbox{ where }
			Y(\omega,z)=\sum_{n\in \Bbb{Z}}L(n)z^{-n-2},\;{\rm
				rank}V\in \Bbb{C};\\
			&{\rm (V7)}&L(-1)=D,\; i.e., Y(L(-1)a,z)={d\over dz}Y(a,z)\;\;\;\mbox{ for any
			}a\in V;\\
			&{\rm (V8)}&V\!=\!\oplus_{n\in {1\over 2}\Bbb{Z}}
			V_{(n)}\mbox{ is }{1\over 2}\Bbb{Z}\mbox{-graded such that
			}L(0)|_{V_{(n)}}\!=\!nId_{V_{(n)}},\dim V_{(n)} \!<\!\infty,\;\;\;\;\\
			& &\mbox{ and } V_{(n)}=0\;\;\mbox{ for }n\mbox{ sufficiently small. }
		\end{eqnarray*}
	\end{defi}
	
	For two vertex operator superalgebras $V_{1}$ and $V_{2}$, a {\it homomorphism} from $V_{1}$ to $V_{2}$ is required to map the Virasoro element of $V_{1}$ to that of
	$V_{2}$.
	An {\it automorphism} of a vertex (operator) superalgebra $V$ is
	1-1 onto homomorphism from $V$ to $V$.
	Let $\sigma$ be an automorphism of order $T$ of a  vertex
	superalgebra $V$.
	Then,
	\begin{eqnarray}
		V=V^{0}\oplus V^{1}\oplus \cdots \oplus V^{T-1}\end{eqnarray}
	where $V^{k}$ is the eigenspace of $V$ for $\sigma$ with eigenvalue
	${\rm exp}\left({2k\pi\mathfrak{i}\over T}\right)$.
	
	Let $V$ be any vertex superalgebra. Then the linear map $\sigma: V \rightarrow V ; \sigma(a+b)=$ $a-b$ for $a \in V^{(0)}, b \in V^{(1)}$, is an automorphism of $V$, which was called the {\it canonical automorphism} (c.f. \cite{FFR}).

	\begin{defi}
		Let $(V,{\bf 1},D,Y)$ be a
		vertex superalgebra with an automorphism $\sigma$
		of order $T$. A $\sigma$-twisted $V$-module is a triple $(M,d,
		Y_{M})$ consisting of a super vector
		space $M$, a $\Z_{2}$-endomorphism $d$ of $M$ and a linear map
		$Y_{M}(\cdot,z)$ from $V$ to
		$({\rm End}M)[[z^{{1\over T}},z^{-{1\over T}}]]$ satisfying the
		following conditions:
		
		{\rm(M1)}$\;\;\;\;$For any $a\in V,u\in M, a_{n}u=0$ for $n\in {1\over
			T}\Z$ sufficiently large;
		
		{\rm(M2)}$\;\;\;\;Y_{M}({\bf 1},z)=Id_{M}$;
		
		{\rm(M3)}$\;\;\;\;[d,Y_{M}(a,z)]=Y_{M}(D(a),z)=\displaystyle{{d\over
				dz}Y_{M}(a,z)}$ for any $a\in V$;
		
		{\rm(M4)}$\;\;\;$For any $\Z_{2}$-homogeneous $a\in V^k,b\in V$, the following
		$\sigma$-twisted
		Jacobi identity holds:
		$$
		\begin{aligned}
			& z_0^{-1} \delta\left(\frac{z_1-z_2}{z_0}\right) Y_M\left(a, z_1\right) Y_M\left(b, z_2\right)-\varepsilon_{a, b} z_0^{-1} \delta\left(\frac{z_2-z_1}{-z_0}\right) Y_M\left(b, z_2\right) Y_M\left(a, z_1\right) \\
			= & z_2^{-1} \delta\left(\frac{z_1-z_0}{z_2}\right)\left(\frac{z_1-z_0}{z_2}\right)^{-\frac{k}{T}} Y_M\left(Y\left(a, z_0\right) b, z_2\right).
		\end{aligned}
		$$
		If $V$ is a vertex operator superalgebra, a $\sigma$-twisted
		$V$-module for $V$ as a  vertex
		superalgebra is called a  $\sigma$-twisted {\it weak} module
		for $V$ as a vertex
		operator superalgebra.
	\end{defi}
	Taking $\sigma=\operatorname{id}_V$ gives the definition of classical (weak) $V$-module. It is clear that $V^{0}$ is a vertex subsuperalgebra of $V$ and all $V^{k}$
	$(k=0,1,\ldots,T-1)$ are $V^{0}$-modules.

	The next lemma follows form \cite[Lemma 2.11]{L2}.
	\begin{lemm}\label{2.4}
		Let $V$ be a vertex superalgebra with an
		automorphism $\phi$ of order $T$ and let $(M,Y_{M})$ be a faithful $\phi$-twisted
		$V$-module. Let $\Z_{T}$-homogeneous elements $a\in V^k,b,u^{0},\ldots,u^{n}$ be
		of $V$. Then
		\begin{eqnarray*}
			[Y_{V}(a,z_{1}),Y_{V}(b,z_{2})]
			=\sum_{j=0}^{\infty}{1\over j!}\left(\left({\partial\over\partial
				z_{2}}\right)^{j}z_{1}^{-1}\delta\left({z_{2}\over
				z_{1}}\right)\right)Y_{V}(u^{j},z_{2})
		\end{eqnarray*}
		if and only if
		\begin{eqnarray*}
			[Y_{M}(a,z_{1}),Y_{M}(b,z_{2})]
			=\sum_{j=0}^{\infty}{1\over j!}\left(\left({\partial\over\partial
				z_{2}}\right)^{j}z_{1}^{-1}\delta\!\left({z_{2}\over
				z_{1}}\right)\!\left({z_{2}\over z_{1}}\right)^{{k\over
					T}}\right)Y_{M}(u^{j},z_{2})
		\end{eqnarray*}
		if and only if $a_{i}b=u^{j}$ for $0\le j\le n$, $a_{i}b=0$ for $j>n$.
	\end{lemm}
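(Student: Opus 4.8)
The statement packages three equivalent assertions: $(\mathrm{A})$ the commutator formula for $Y_V$ on $V$ itself, $(\mathrm{B})$ the twisted commutator formula for $Y_M$ on $M$, and $(\mathrm{C})$ the relations $a_jb=u^j$ for $0\le j\le n$ together with $a_jb=0$ for $j>n$ (here and below $u^j:=0$ for $j>n$, so that the sums $\sum_{j=0}^{\infty}$ appearing in $(\mathrm{A})$ and $(\mathrm{B})$ are in fact finite). The plan is to prove $(\mathrm{C})\Rightarrow(\mathrm{A})$ and $(\mathrm{C})\Rightarrow(\mathrm{B})$ directly by applying $\operatorname{Res}_{z_0}$ to the Jacobi identity $(\mathrm{V5})$ and to the $\phi$-twisted Jacobi identity $(\mathrm{M4})$, and then to prove the two converses by combining those residue computations with a linear-independence property of the (twisted) formal $\delta$-series and the injectivity of the vertex operator maps; injectivity of $Y_M$ is precisely what faithfulness of $M$ provides.

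For the forward directions I would first record the residue identities, valid for $j\ge 0$ (and with vanishing right-hand sides when $j<0$):
$$\operatorname{Res}_{z_0}\!\Big(z_2^{-1}\delta\big(\tfrac{z_1-z_0}{z_2}\big)z_0^{-j-1}\Big)=\tfrac{1}{j!}\big(\tfrac{\partial}{\partial z_2}\big)^{j}z_1^{-1}\delta\big(\tfrac{z_2}{z_1}\big),$$
$$\operatorname{Res}_{z_0}\!\Big(z_2^{-1}\delta\big(\tfrac{z_1-z_0}{z_2}\big)\big(\tfrac{z_1-z_0}{z_2}\big)^{-k/T}z_0^{-j-1}\Big)=\tfrac{1}{j!}\big(\tfrac{\partial}{\partial z_2}\big)^{j}\Big(z_1^{-1}\delta\big(\tfrac{z_2}{z_1}\big)\big(\tfrac{z_2}{z_1}\big)^{k/T}\Big),$$
both obtained from the standard $\delta$-function substitution formula $z_2^{-1}\delta(\tfrac{z_1-z_0}{z_2})f(\tfrac{z_1-z_0}{z_2})=z_1^{-1}\delta(\tfrac{z_2+z_0}{z_1})f(\tfrac{z_2+z_0}{z_1})$ after expanding in nonnegative powers of $z_0$. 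Taking $\operatorname{Res}_{z_0}$ of $(\mathrm{V5})$ collapses its left side to the supercommutator and, via the first identity, turns its right side into $\sum_{j\ge 0}\tfrac{1}{j!}\big((\tfrac{\partial}{\partial z_2})^{j}z_1^{-1}\delta(\tfrac{z_2}{z_1})\big)Y_V(a_jb,z_2)$; likewise $\operatorname{Res}_{z_0}$ of $(\mathrm{M4})$ (for $a\in V^k$, $b\in V$), via the second identity, yields $[Y_M(a,z_1),Y_M(b,z_2)]=\sum_{j\ge 0}\tfrac{1}{j!}\big((\tfrac{\partial}{\partial z_2})^{j}z_1^{-1}\delta(\tfrac{z_2}{z_1})(\tfrac{z_2}{z_1})^{k/T}\big)Y_M(a_jb,z_2)$. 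These two formulas are unconditional, and substituting $u^j=a_jb$ gives $(\mathrm{C})\Rightarrow(\mathrm{A})$ and $(\mathrm{C})\Rightarrow(\mathrm{B})$.

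For the converses, the crucial input is the linear-independence lemma: if $g_0,\dots,g_N$ lie in $(\operatorname{End}M)[[z_2^{1/T},z_2^{-1/T}]]$ and $\sum_{j=0}^{N}g_j(z_2)\big(\tfrac{\partial}{\partial z_2}\big)^{j}\big(z_1^{-1}\delta(\tfrac{z_2}{z_1})(\tfrac{z_2}{z_1})^{k/T}\big)=0$, then every $g_j=0$. I would prove this by applying $\operatorname{Res}_{z_1}z_1^{\,m+k/T}$ for $m\in\Z$, which reduces the relation to $\sum_{j=0}^{N}g_j(z_2)\,p_j(m)\,z_2^{-j}=0$ with $p_j(m)=(m+\tfrac{k}{T})(m+\tfrac{k}{T}-1)\cdots(m+\tfrac{k}{T}-j+1)$ a polynomial in $m$ of exact degree $j$, and then inverting the resulting Vandermonde-type system at $N+1$ distinct values of $m$; the untwisted case is $k=0$. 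Subtracting $(\mathrm{A})$ from the unconditional $Y_V$-formula and applying the lemma gives $Y_V(a_jb-u^j,z_2)=0$ for all $j$, whence $a_jb=u^j$ for all $j$, since $Y_V$ is injective by the creation axiom $(\mathrm{V4})$ ($\lim_{z\to0}Y_V(v,z)\mathbf{1}=v$); that is $(\mathrm{C})$. Subtracting $(\mathrm{B})$ from the unconditional $Y_M$-formula and applying the lemma gives $Y_M(a_jb-u^j,z_2)=0$, and faithfulness of $M$ forces $a_jb=u^j$ for all $j$, again $(\mathrm{C})$. Hence $(\mathrm{A})\Leftrightarrow(\mathrm{C})\Leftrightarrow(\mathrm{B})$.

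I expect the one genuinely delicate point to be the bookkeeping of the fractional powers $(z_1-z_0)^{-k/T}$ and $(z_2/z_1)^{k/T}$: one must fix the expansion conventions so that the twisted residue identity above and the twisted linear-independence lemma reduce transparently to their untwisted counterparts. Once that is pinned down, the remainder is the usual residue calculus of formal distributions, and indeed the whole statement can alternatively be quoted directly from \cite[Lemma 2.11]{L2}.
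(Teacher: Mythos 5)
Your argument is correct. Note first that the paper itself offers no proof of this lemma: it is stated with the single remark that it ``follows from [L2, Lemma 2.11]'', so there is nothing internal to compare against, and your write-up is in effect supplying the proof that the paper outsources to Li. What you do is the standard argument, and it is also essentially how the result is established in [L2]: the two residue identities you record are exactly the ones obtained from the substitution property of the formal $\delta$-function (with the convention that $(z_1-z_0)^{-k/T}$ is expanded in nonnegative integral powers of $z_0$, matching the convention in (M4)), so that $\operatorname{Res}_{z_0}$ of (V5) and of (M4) yields the two unconditional (super)commutator formulas with $u^j$ replaced by $a_jb$; this gives $(\mathrm{C})\Rightarrow(\mathrm{A})$ and $(\mathrm{C})\Rightarrow(\mathrm{B})$. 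For the converses, your linear-independence lemma for $\sum_j g_j(z_2)\,(\partial/\partial z_2)^j\bigl(z_1^{-1}\delta(z_2/z_1)(z_2/z_1)^{k/T}\bigr)$ is proved correctly: applying $\operatorname{Res}_{z_1}z_1^{m+k/T}$ produces the system $\sum_j p_j(m)g_j(z_2)z_2^{-j}=0$ with $\deg p_j=j$, which is nondegenerate at $N+1$ distinct integers $m$. Combined with injectivity of $Y_V$ (from (V4)) in one direction and faithfulness of $(M,Y_M)$ in the other, this closes both implications $(\mathrm{A})\Rightarrow(\mathrm{C})$ and $(\mathrm{B})\Rightarrow(\mathrm{C})$; faithfulness is indeed used only here, exactly as it should be. The only caveats are cosmetic: the brackets throughout are super-commutators (you note this), the indices $a_ib$ in the statement should read $a_jb$, and the sums over $j$ are finite by the truncation axiom (V1), so the formal manipulations are legitimate.
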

	\section{$N=2$ superconformal algebra}
	In this section, the definition of the $N=2$ superconformal algebras are presented. We will elaborate the fact that any restricted module for the twisted $N=2$ superconformal algebra is a (weak) twisted module for a vertex (operator) superalgebra associated to the untwisted $N=2$ superconformal algebra. And this is exactly our vertex algebraic motivation to study the twisted $N=2$ superconformal algebra and its restricted modules.
	\subsection{Untwisted $N=2$ superconformal algebra}
	The {\it untwisted} $N=2$ {\it superconformal algebra} $\mathcal{A}$  is the infinite-dimensional Lie superalgebra with basis ${L}_m, {J}_n, {G}_{p}^{ \pm}, C$ with relations given by
	$$
	\begin{aligned}
		& \left[L_m, L_n\right]=(m-n) L_{m+n}+\frac{1}{12}\left(m^3-m\right) \delta_{m+n, 0} C,\\
		& \left[L_m, J_n\right]=-n J_{n+m}},\ {\left[J_m, J_n\right]=\frac{1}{3} m \delta_{m+n, 0}C, \\
		& \left[L_m, G_p^{ \pm}\right]=\left(\frac{1}{2} m-p\right) G_{m+p}^{ \pm}},\ {\left[J_m, G_p^{ \pm}\right]= \pm G_{m+p}^{ \pm}, \\
		& \left[G_p^{+}, G_q^{-}\right]=2 L_{p+q}+(p-q) J_{p+q}+\frac{1}{3}\left(p^2-\frac{1}{4}\right) \delta_{p+q, 0}C, \\
		& \left[G_p^{+}, G_q^{+}\right]=\left[G_p^{-}, G_q^{-}\right]=0,
	\end{aligned}
	$$
	for all $m, n \in \Z$ and $p, q \in \frac{1}{2}+\Z$, where $C$ is the central element.
	By definition,  the even space $\mathcal{A}_{\bar{0}}=\operatorname{span}\{L_m,J_r,C| m,r\in \Z\}$ and  the odd space $\mathcal{A}_{\bar{1}}=\operatorname{span}\{G^{\pm}_{s+\frac{1}{2}}|s\in \Z\}$. And $\mathcal{A}=\oplus_{n\in\frac{1}{2}\Z}\mathcal{A}_{n}$ by defining
	$$\operatorname{deg}L_m=m,\  \operatorname{deg}J_r=r,\ \ \operatorname{deg}G^{\pm}_{s+\frac{1}{2}}=s+\frac{1}{2},\ \operatorname{deg}C=0,$$
	for $m,r,s\in\Z$. Then we have a triangular decomposition $\mathcal{A}=\mathcal{A}_+\oplus\mathcal{A}_{0}\oplus\mathcal{A}_-$ where
	$$\mathcal{A}_{\pm}=\sum_{n=1}^{\infty} \left(\C L_{\pm n}+\C J_{\pm n}+ \C G^{+}_{\pm n \mp\frac{1}{2}}+ \C G^{-}_{\pm n \mp\frac{1}{2}}\right),\quad \mathcal{A}_{0}=\C L_0\oplus\C J_0 \oplus \C C.$$

	Now we recall the definition of Whittaker modules over $\mathcal{A}$ in \cite{LLD}.
	\begin{defi}
		Let  $$\mathcal{K}=\sum_{n=1}^{\infty} \left(\C L_{n}+\C J_{n}+ \C G^{+}_{n+\frac{1}{2}}+ \C G^{-}_{n+\frac{1}{2}}\right),$$  and $\Phi$ be a Lie superalgebra homomorphism $\Phi:\mathcal{K}\rightarrow\mathbb{C}$. An $\mathcal{A}$-module $P$ is called a Whittaker module of type $(\mathcal{K},\Phi)$ if it is generated
		by a vector $w$, which is called a Whittaker vector, with $xw =\Phi(x)w$ and $Cw=\alpha w, J_0w =\beta w$ for any $x\in\mathcal{K}$ and some $\alpha,\beta \in\mathbb{C}$.
	\end{defi}
	Note that a Whittaker module over  $\mathcal{A}$ is simple if and only if $\Phi(L_2)\neq0$
	(see \cite{LLD}). Moreover, the simplicity of the  quotient modules of the  non-simple  universal Whittaker module
	was also studied.

	Denote by $M(c, 0,0)$ the Verma module over $\mathcal{A}$  of central charge $c\in\C$  generated by a lowest weight vector ${\bf 1}$ with $L_0$ and $J_0$ act as $0$. An element $\mathrm{v}\in M(c, 0,0)$ is called a \emph{singular vector} if $\mathrm{v}$ is an eigenvector of $L_0$ and $J_0$ and
	$$L_m\mathrm{v}=J_r\mathrm{v}=G^{\pm}_{q+\frac{1}{2}}\mathrm{v}=0$$
	for $m,r\in\Z_+$ and $q\in \N$. In fact, $ G^{+}_{-\frac{1}{2}} \mathbf{1}$ and $G^{-}_{-\frac{1}{2}} \mathbf{1}$ are two singular vectors here.
	Now set
	$$\bar{M}(c, 0,0)=\frac{M(c, 0,0)}{\left\langle G^{+}_{-\frac{1}{2}} \mathbf{1}\right\rangle+\left\langle G^{-}_{-\frac{1}{2}} \mathbf{1}\right\rangle} $$
	where  $\left\langle G^{\pm}_{-\frac{1}{2}} \mathbf{1}\right\rangle$ is the submodule generated by $G^{\pm}_{-\frac{1}{2}} \mathbf{1}$. Then $\bar{M}(c, 0,0)$ is also a lowest  weight $\mathcal{A}$-module. Still use ${\bf 1}$ denoting the lowest weight vector. Define

$$L(z)=\sum_{m\in \Z}L_mz^{-m-2},\ J(z)=\sum_{r \in\Z}J_rz^{-r-1},\
	G^{\pm}(z)=\sum_{p\in\Z}G^{\pm}_{p+\frac{1}{2}}z^{-p-{2}}. $$

	Then we can see that $L(z)$, $J(z)$, $G^{+}(z)$, $G^{-}(z)$ are mutually local to each other (see also (\ref{untwisted rela})). Using Li's local system theory (c.f. \cite{LL,L}), we have the following result (see \cite{A}).
	\begin{prop}
		$\bar{M}(c, 0,0)$ is  a vertex operator superalgebra, where $$Y(L_{-2}{\bf 1},z)=L(z),\ Y(J_{-1}{\bf 1},z)=J(z),\ Y(G^{\pm}_{-\frac{3}{2}}{\bf 1},z)=G^{\pm}(z).$$  It is called the universal $N=2$ superconformal vertex (operator) algebra.
	\end{prop}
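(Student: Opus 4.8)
The plan is to apply Li's local system theory (\cite{LL,L}), in the form of the reconstruction theorem for vertex superalgebras, directly on $W=\bar{M}(c,0,0)$: with $\mathbf{1}$ the image of the Verma lowest weight vector and translation operator $D:=L_{-1}$, it suffices to check that the four series $L(z),J(z),G^{+}(z),G^{-}(z)$ are fields on $W$ that are pairwise (super)local, satisfy $a(z)\mathbf{1}\in W[[z]]$ with $\lim_{z\to 0}a(z)\mathbf{1}\in W$, satisfy $[D,a(z)]=\frac{d}{dz}a(z)$ and $D\mathbf{1}=0$, and whose modes generate $W$ from $\mathbf{1}$; the theorem then produces a unique vertex superalgebra structure on $\bar{M}(c,0,0)$ with $Y(a(z)\mathbf{1}|_{z=0},z)=a(z)$ for each generator. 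Equivalently, in the local system language, the smallest subspace of the space of fields on $\bar{M}(c,0,0)$ that contains these four fields and the identity field and is closed under $\frac{d}{dz}$ and the $n$-th products is a vertex superalgebra, and the evaluation $\alpha(z)\mapsto\lim_{z\to 0}\alpha(z)\mathbf{1}$ identifies it with $\bar{M}(c,0,0)$ (surjective since $\bar{M}(c,0,0)=U(\mathcal{A}_-)\mathbf{1}$, injective by the universal property of this quotient-Verma module).

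First I would record the routine verifications. As a quotient of $M(c,0,0)=U(\mathcal{A}_-)\mathbf{1}$, the space $\bar{M}(c,0,0)$ is graded by the $L_0$-eigenvalue with spectrum in $\frac{1}{2}\Z_{\geq 0}$ and finite-dimensional graded pieces, and every $L_m,J_m,G^{\pm}_p$ shifts the grading by a fixed amount and strictly lowers it for $m,p>0$; hence only finitely many modes fail to annihilate a given vector, so $L(z),J(z),G^{\pm}(z)$ are fields. Translation covariance $[L_{-1},L(z)]=\frac{d}{dz}L(z)$, $[L_{-1},J(z)]=\frac{d}{dz}J(z)$, $[L_{-1},G^{\pm}(z)]=\frac{d}{dz}G^{\pm}(z)$ is read off directly from $[L_m,L_n]$, $[L_m,J_n]$, $[L_m,G^{\pm}_p]$, and $D\mathbf{1}=L_{-1}\mathbf{1}=0$ by the next step. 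Pairwise (super)locality follows from the structure relations of $\mathcal{A}$ via the standard dictionary between a mode commutator that is a finite sum of modes and the associated local operator product expansion (Lemma \ref{2.4}): $L(z)$ is local of order $4$ with itself; $G^{+}(z)$ with $G^{-}(z)$ of order $3$ (with $L$, $J$ and a scalar in the singular part); $L(z)$ with $J(z)$ and with $G^{\pm}(z)$, and $J(z)$ with itself, of order $\leq 2$; $J(z)$ with $G^{\pm}(z)$ of order $1$; and since $[G^{\pm}_p,G^{\pm}_q]=0$ one has $G^{\pm}(z_1)G^{\pm}(z_2)+G^{\pm}(z_2)G^{\pm}(z_1)=0$, so $G^{+}(z)$ (resp.\ $G^{-}(z)$) is superlocal with itself. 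Finally $\bar{M}(c,0,0)=U(\mathcal{A}_-)\mathbf{1}$ is spanned by the modes of the four fields applied to $\mathbf{1}$, which is the completeness hypothesis.

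The crux is the vacuum axiom, and it is precisely what dictates the passage from $M(c,0,0)$ to $\bar{M}(c,0,0)$. Since $\mathcal{A}_+\mathbf{1}=0$ and $L_0\mathbf{1}=J_0\mathbf{1}=0$, the only potentially negative-power contributions are $L_{-1}\mathbf{1}\,z^{-1}$ in $L(z)\mathbf{1}$ and $G^{\pm}_{-\frac{1}{2}}\mathbf{1}\,z^{-1}$ in $G^{\pm}(z)\mathbf{1}$. Now $G^{\pm}_{-\frac{1}{2}}\mathbf{1}=0$ is exactly the defining relation of $\bar{M}(c,0,0)$ (the two singular vectors are killed), and then
$$2\,L_{-1}\mathbf{1}=[G^{+}_{-\frac{1}{2}},G^{-}_{-\frac{1}{2}}]\mathbf{1}=G^{+}_{-\frac{1}{2}}\bigl(G^{-}_{-\frac{1}{2}}\mathbf{1}\bigr)+G^{-}_{-\frac{1}{2}}\bigl(G^{+}_{-\frac{1}{2}}\mathbf{1}\bigr)=0,$$
so $L_{-1}\mathbf{1}=0$ as well; thus $L(z)\mathbf{1},J(z)\mathbf{1},G^{\pm}(z)\mathbf{1}\in\bar{M}(c,0,0)[[z]]$, with constant terms $L_{-2}\mathbf{1},\,J_{-1}\mathbf{1},\,G^{\pm}_{-\frac{3}{2}}\mathbf{1}$. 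The reconstruction theorem now gives the vertex superalgebra structure with $Y(L_{-2}\mathbf{1},z)=L(z)$, $Y(J_{-1}\mathbf{1},z)=J(z)$, $Y(G^{\pm}_{-\frac{3}{2}}\mathbf{1},z)=G^{\pm}(z)$.

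It remains to verify (V6)--(V8) with $\omega:=L_{-2}\mathbf{1}$. By construction $Y(\omega,z)=L(z)=\sum_{m\in\Z}L_m z^{-m-2}$, so the operators $L(n)$ coincide with the $L_n$: (V6) is then the Virasoro relation of $\mathcal{A}$ with the central element $C$ acting as the scalar $c$ (so $\mathrm{rank}\,\bar{M}(c,0,0)=c$), and (V7) is the identity $L(-1)=L_{-1}=D$ used above. For (V8), $L(0)=L_0$ acts semisimply with spectrum in $\frac{1}{2}\Z_{\geq 0}$ and finite-dimensional eigenspaces that vanish in negative weight, all inherited from the PBW basis of $M(c,0,0)$, of which $\bar{M}(c,0,0)$ is a quotient. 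I expect the main work to be bookkeeping: matching each bracket of $\mathcal{A}$ to its operator product expansion with the correct singular terms and signs (the $G^{+}$--$G^{-}$ expansion, with three singular terms, being the delicate case), and checking on the structural side that $G^{\pm}_{-\frac{1}{2}}\mathbf{1}$ are genuine singular vectors whose generated submodules do not contain $\mathbf{1}$, so that $\bar{M}(c,0,0)\neq 0$ and the vacuum axiom can be met after this quotient and no smaller one.
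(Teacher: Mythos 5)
Your proposal is correct and follows essentially the same route as the paper, which simply observes that $L(z),J(z),G^{\pm}(z)$ are mutually (super)local and invokes Li's local system / reconstruction theory, citing Adamovi\'{c}. Your additional details — in particular that killing the singular vectors $G^{\pm}_{-\frac{1}{2}}\mathbf{1}$ forces $L_{-1}\mathbf{1}=0$ via $2L_{-1}=[G^{+}_{-\frac{1}{2}},G^{-}_{-\frac{1}{2}}]$, so the vacuum axiom holds precisely on the quotient $\bar{M}(c,0,0)$ — are accurate and consistent with the intended argument.
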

	
	Let $\psi$ be the Lie superalgebra automorphism defined by $L_m\mapsto L_m, J_n\mapsto -J_n, G_p^{\pm}\mapsto G_p^{\mp}$. Then it gives rise to an automorphism of 	$\bar{M}(c, 0,0)$ of order 2 and we still use $\psi$ to denote it. Note that it is not the canonical automorphism $\sigma$ mentioned in Section 2.
	
	For convenience of correspondence,  we consider the substitutions (c.f. \cite{Barr}):
	$$
	G_{n+\frac{1}{2}}^{(1)}=\frac{1}{\sqrt{2}}\left(G_{n+\frac{1}{2}}^{+}+G_{n+\frac{1}{2}}^{-}\right), \quad G_{n+\frac{1}{2}}^{(2)}=\frac{\mathfrak{i}}{\sqrt{2}}\left(G_{n+\frac{1}{2}}^{+}-G_{n+\frac{1}{2}}^{-}\right) .
	$$
	Note then that $G_{n+\frac{1}{2}}^{ \pm}=\frac{1}{\sqrt{2}}\left(G_{n+\frac{1}{2}}^{(1)} \mp \mathfrak{i} G_{n+\frac{1}{2}}^{(2)}\right)$. Under these substitutions, the $N=2$ superconformal algebra has a basis consisting of the central element $C$ and $L_n, J_n, G_{n+\frac{1}{2}}^{(j)}$, satisfying the following relations:
	\begin{equation}\label{N=1}
		\aligned
		& {\left[L_m, L_n\right]=(m-n) L_{m+n}+\frac{1}{12}\left(m^3-m\right) \delta_{m+n, 0}} C,\\
		& {\left[L_m, J_n\right]=-n J_{n+m}},\ {\left[J_m, J_n\right]=\frac{1}{3} m \delta_{m+n, 0}}C, \\
		&{\left[L_m, G_{n+\frac{1}{2}}^{(j)}\right]=\left(\frac{m}{2}-n-\frac{1}{2}\right) G_{m+n+\frac{1}{2}}^{(j)},} \\
		&{\left[J_m, G_{n+\frac{1}{2}}^{(1)}\right]=-\mathfrak{i} G_{m+n+\frac{1}{2}}^{(2)}, \quad\left[J_m, G_{n+\frac{1}{2}}^{(2)}\right]=\mathfrak{i} G_{m+n+\frac{1}{2}}^{(1)},} \\
		&{\left[G_{m+\frac{1}{2}}^{(j)}, G_{n-\frac{1}{2}}^{(j)}\right]=2 L_{m+n}+\frac{1}{3}\left(m^2+m\right) \delta_{m+n, 0} C,} \\
		&{\left[G_{m+\frac{1}{2}}^{(1)}, G_{n-\frac{1}{2}}^{(2)}\right]=-\mathfrak{i}(m-n+1) J_{m+n},}
		\endaligned
	\end{equation}
	for $m, n \in \mathbb{Z}$ and $j=1,2$. After substitutions,
	\begin{equation}\label{subs}
		\aligned
		&G^{(1)}(z)=Y(G^{(1)}_{-\frac{3}{2}}{\bf 1},z)=\frac{1}{\sqrt{2}}\left(G^{+}(z)+G^{-}(z)\right)=\sum_{p\in\Z}G^{(1)}_{p+\frac{1}{2}}z^{-p-{2}}, \\
		&G^{(2)}(z)=Y(G^{(2)}_{-\frac{3}{2}}{\bf 1},z)=\frac{\mathfrak{i}}{\sqrt{2}}\left(G^{+}(z)+G^{-}(z)\right)=\sum_{q\in\Z}G^{(2)}_{q+\frac{1}{2}}z^{-q-2}.
		\endaligned
	\end{equation}	
	And for $j=1,2$, we have
	\begin{equation}\label{untwisted rela}
		\aligned
		&[L(z_1),L(z_2)]=z_1^{-1}\delta\left(\frac{z_2}{z_1}\right)L^{\prime}(z_2)+2\left(\frac{\partial}{\partial z_2}z_1^{-1}\delta\left(\frac{z_2}{z_1}\right)\right)L(z_2)+\frac{1}{12}\left(\frac{\partial}{\partial z_2}\right)^3\left(z_1^{-1}\delta\left(\frac{z_2}{z_1}\right)\right)C,
		\\&
		[L(z_1),J(z_2)]=z_1^{-1}\delta\left(\frac{z_2}{z_1}\right)J^{\prime}(z_2)+\left(\frac{\partial}{\partial z_2}z_1^{-1}\delta\left(\frac{z_2}{z_1}\right)\right)J(z_2), \\&
		[J(z_1),J(z_2)]=\frac{1}{3}\left(\frac{\partial}{\partial z_2}z_1^{-1}\delta\left(\frac{z_2}{z_1}\right)\right)C,\\&
		[L(z_1), G^{(j)}(z_2)]=z_1^{-1}\delta\left(\frac{z_2}{z_1}\right)G^{(j)\prime}(z_2)+\frac{3}{2}\left(\frac{\partial}{\partial z_2}z_1^{-1}\delta\left(\frac{z_2}{z_1}\right)\right)G^{(j)}(z_2), \\&
		[J(z_1), G^{(1)}(z_2)]=- \mathfrak{i}z_1^{-1}\delta\left(\frac{z_2}{z_1}\right)G^{(2)}(z_2),\\&
		[J(z_1), G^{(2)}(z_2)]= \mathfrak{i} z_1^{-1}\delta\left(\frac{z_2}{z_1}\right)G^{(1)}(z_2),\\&
		[G^{(j)}(z_1),G^{(j)}(z_2)]=2z_1^{-1}\delta\left(\frac{z_2}{z_1}\right)L(z_2)+\frac{1}{3}\left(\frac{\partial}{\partial z_2}\right)^2\left(z_1^{-1}\delta\left(\frac{z_2}{z_1}\right)\right) C,\\&
		[G^{(1)}(z_1),G^{(2)}(z_2)]=-\mathfrak{i} z_1^{-1}\delta\left(\frac{z_2}{z_1}\right)J^{\prime}(z_2)-2\mathfrak{i}\left(\frac{\partial}{\partial z_2}z_1^{-1}\delta\left(\frac{z_2}{z_1}\right)\right)J(z_2),
		\endaligned
	\end{equation}
	which are equivalent to the bracket relations (\ref{N=1}).

	\subsection{Twisted $N=2$ superconformal algebra}
	Now we recall the definition of the  \textit{twisted  $N=2$ superconformal algebra} (the twisted sector). It   is an  infinite dimensional  Lie superalgebra
	$$\mathcal{T}=\bigoplus_{m\in\Z}\C L_m \oplus \bigoplus_{r\in\frac{1}{2}+\Z}\C T_{r} \oplus\bigoplus_{p\in\frac{1}{2}\Z}\C G_{p} \oplus\C C,$$
	which  satisfies  the following Lie super-brackets
	\begin{equation}\label{def1.1}
		\aligned
		&[L_m,L_n]= (m-n)L_{m+n}+\frac{m^{3}-m}{12}\delta_{m+n,0}C,
		\\&
		[L_m,T_r]= -rT_{m+r},\ [T_r,T_s]= \frac{r}{3}\delta_{r+s,0}C,\\&
		[L_m,G_p]= (\frac{m}{2}-p)G_{m+p},\ [T_r,G_p]=G_{r+p}, \\&
		[G_p,G_q]= \left\{\begin{array}{llll}
			(-1)^{2p}\big(2L_{p+q}+\frac{4p^{2}-1}{12}\delta_{p+q,0}C\big)&\mbox{if}\
			p+q\in\Z,\\[4pt]
			(-1)^{2p+1}(p-q)T_{p+q}&\mbox{if}\
			p+q\in\frac{1}{2}+\Z,
		\end{array}\right.
		\endaligned
	\end{equation}
	where $m,n\in\Z, r,s\in\frac{1}{2}+\Z, p,q\in\frac{1}{2}\Z$ and $C$ is a central element.

	Some of the interesting features of   $\T$ are presented as follows. The twisted $N=2$ superconformal algebra consists of the Virasoro  generators
	$L_m$, the Heisenberg generators $T_r$  and  the fermionic generators $G_p$.
	By definition, the even space $\T_{\bar 0}=\mathrm{span}\{L_m,T_r,C\mid m\in\Z,r\in\frac{1}{2}+\Z\}$ and the odd space  $\T_{\bar 1}=\mathrm{span}\{G_p\mid p\in\frac{1}{2}\Z\}$.
	Notice that the even part  $\T_{\bar 0}$   is isomorphic to
	mirror Heisenberg-Virasoro algebra (see \cite{LPXZ}).
	The subalgebra of $\T$ spanned by $\{L_m,G_p,C\mid m\in\Z,p\in\epsilon+\Z\}(\epsilon=0,\frac{1}{2})$  is isomorphic to the well-known super-Virasoro algebra. Clearly, $\C C$ is the center of $\T$.
	Let $\T_m=\mathrm{span}\{L_{m},G_{m},\delta_{m,0}C\mid m\in\Z\}$, $\T_{n-\frac{1}{2}}=\mathrm{span}\{G_{n-\frac{1}{2}},T_{n-\frac{1}{2}}\mid n\in\Z\}$.
	Then $[\T_{\frac{m}{2}},\T_{\frac{n}{2}}]\subset\T_{\frac{m+n}{2}}$, and $\T$ and  $U(\T)$ are $\frac{1}{2}\Z$-graded.
	It is easy to see that $\T$ has the following triangular decomposition:
	$$\T=\T_+\oplus\T_0\oplus\T_-,$$ where $\T_+=\mathrm{span}\{L_{m},T_r,G_{p}\mid m\in\Z_+,r\in\frac{1}{2}+\N,p\in\frac{1}{2}\Z_+\}$ and $\T_-=\mathrm{span}\{L_{m},T_r,G_{p}\mid -m\in\Z_+,-r\in\frac{1}{2}+\N,-p\in\frac{1}{2}\Z_+\}$.

	As a technical treatment in this section, set
	 $$G_{p}^{+}=G_p$$ for $p\in \Z$ and  $$G_{p}^{-}=\mathfrak{i}G_p$$ for $p\in \frac{1}{2}+\Z$. Then for $m,n\in\Z$, $r,s\in\frac{1}{2}+\Z$, the Lie brackets of the twisted $N=2$ superconformal algebra $\mathcal{T}$ can be rewritten as
	\begin{equation*}
		\aligned
		&[L_m, L_n]= (m-n)L_{m+n}+\frac{m^{3}-m}{12}\delta_{m+n,0}C,
		\\&
		[L_m,T_r]= -rT_{m+r},\ [T_r,T_s]= \frac{r}{3}\delta_{r+s,0}C,\\&
		[L_m, G_p^{\pm}]= (\frac{m}{2}-p)G_{m+p}^{\pm},\ [T_r,G_p^{\pm}]=\mp \mathfrak{i} G_{r+p}^{\mp}, \\&
		[G_p^{\pm}, G_q^{\pm}]=  2L_{p+q}+\frac{4p^{2}-1}{12}\delta_{p+q,0}C,\ [G_p^{+},G_q^{-}]=  -\mathfrak{i}(p-q)T_{p+q},
		\endaligned
	\end{equation*}
	where $C$ is still the central element.	Set
	\begin{equation*}
		\aligned
		&L(z)=\sum_{m\in \Z}L_mz^{-m-2},\quad T(z)=-\sum_{r \in\Z}T_{r+\frac{1}{2}}z^{-r-\frac{3}{2}},
		\\&
			G^{+}(z)=\sum_{q\in\Z}G^-_{q+\frac{1}{2}}z^{-q-2}, \quad
	 G^{-}(z)=\sum_{p\in\Z}G_p^{+}z^{-p-\frac{3}{2}}.
		\endaligned
	\end{equation*}
	Then the defining relations above are equivalent to the following identities:
	\begin{equation*}
		\aligned
		&[L(z_1),L(z_2)]=z_1^{-1}\delta\left(\frac{z_2}{z_1}\right)L^{\prime}(z_2)+2\left(\frac{\partial}{\partial z_2}z_1^{-1}\delta\left(\frac{z_2}{z_1}\right)\right)L(z_2)+\frac{1}{12}\left(\frac{\partial}{\partial z_2}\right)^3\left(z_1^{-1}\delta\left(\frac{z_2}{z_1}\right)\right)C,
		\\&
		[L(z_1),T(z_2)]=z_1^{-1}\delta\left(\frac{z_2}{z_1}\right)T^{\prime}(z_2)+\left(\frac{\partial}{\partial z_2}z_1^{-1}\delta\left(\frac{z_2}{z_1}\right)\right)T(z_2), \\&
		[T(z_1),T(z_2)]=\frac{1}{3}\frac{\partial}{\partial z_2}\left(z_1^{-1}\delta\left(\frac{z_2}{z_1}\right)\left(\frac{z_2}{z_1}\right)^{\frac{1}{2}}\right)C,\\&
		[L(z_1), G^{\pm}(z_2)]=z_1^{-1}\delta\left(\frac{z_2}{z_1}\right)G^{{\pm}\prime}(z_2)+\frac{3}{2}\left(\frac{\partial}{\partial z_2}z_1^{-1}\delta\left(\frac{z_2}{z_1}\right)\right)G^{\pm}(z_2), \\&
		[T(z_1), G^{\pm}(z_2)]=\mp \mathfrak{i} z_1^{-1}\delta\left(\frac{z_2}{z_1}\right)\left(\frac{z_2}{z_1}\right)^{\frac{1}{2}} G^{\mp}(z_2),\\&
		[G^+(z_1),G^+(z_2)]=2z_1^{-1}\delta\left(\frac{z_2}{z_1}\right)L(z_2)+\frac{1}{3}\left(\frac{\partial}{\partial z_2}\right)^2\left(z_1^{-1}\delta\left(\frac{z_2}{z_1}\right)\right) C,\\&			[G^-(z_1),G^-(z_2)]=2z_1^{-1}\delta\left(\frac{z_2}{z_1}\right)\left(\frac{z_2}{z_1}\right)^{\frac{1}{2}}L(z_2)+\frac{1}{3}\left(\frac{\partial}{\partial z_2}\right)^2\left(z_1^{-1}\delta\left(\frac{z_2}{z_1}\right)\left(\frac{z_2}{z_1}\right)^{\frac{1}{2}}\right) C,\\&
		[G^+(z_1),G^-(z_2)]=-\mathfrak{i} z_1^{-1}\delta\left(\frac{z_2}{z_1}\right)T^{\prime}(z_2)-2\mathfrak{i}\left(\frac{\partial}{\partial z_2}z_1^{-1}\delta\left(\frac{z_2}{z_1}\right)\right)T(z_2).
		\endaligned
	\end{equation*}
	
	\begin{defi}\rm
		A  $\T$-module is called {\em  restricted} if for any $v\in M$ there exists  $k\in\N$ such that $L_{m}v=T_{r}v=G_pv=0$ for
		$m,p,r> k$. A restricted module is also called a {\em smooth module}.
	\end{defi}
	
	Let $M$ be any restricted module for the twisted $N=2$ superconformal algebra $\mathcal{T}$ with central charge $c$. From the relations above, we see that $\{L(z), T(z), G^+(z),G^-(z)\}$ is a set of mutually local $\Z_2$-twisted vertex operators on $M$. Following the twisted local system theory (c.f. \cite{L2}), let $V$ be the vertex superalgebra generated by $\{L(z), T(z), G(z)^+,G(z)^-\}\cup\{I(z)\}$, where $I(z)$ is the identity operator. Then $(M, Y_M(\cdot,z))$ is a faithful $\psi$-twisted  $V$-module with the generating relations above. By Lemma \ref{2.4}, $Y_V(L(z),z_1)$, $Y_V(T(z),z_1)$, $Y_V(G^+(z),z_1)$, $Y_V(G^-(z),z_1)$ satisfy the untwisted generating relation (\ref{untwisted rela}). Therefore, $V$ is a lowest weight module for the untwisted $N=2$ superconformal algebra $\mathcal{A}$ under the linear map: $$L_m\mapsto L(z)_{m+1},\ J_r\mapsto T(z)_{r},\ G^{(1)}_{p+\frac{1}{2}}\mapsto G^+(z)_{p+1},\ G^{(2)}_{q+\frac{1}{2}}\mapsto G^-(z)_{q+1},\ C\mapsto c$$
	(Note the substitution (\ref{subs}),  the fact that $\left\langle G^{+}_{-\frac{1}{2}} \mathbf{1}\right\rangle+\left\langle G^{-}_{-\frac{1}{2}} \mathbf{1}\right\rangle=\left\langle G^{(1)}_{-\frac{1}{2}} \mathbf{1}\right\rangle=\left\langle G^{(2)}_{-\frac{1}{2}} \mathbf{1}\right\rangle$ in $\bar{M}(c,0,0)$ and the structure of vertex superalgebra $V$).	Thus, $V$ is a vertex operator superalgebra, which is a quotient algebra of $\bar{M}(c,0,0)$. Consequently, $M$ is a weak $\psi$-twisted module for $\bar{M}(c,0,0)$. In fact, we have already proved the desired result.
	\begin{prop}\label{VOA-version}
		Any restricted module $M$ for the  twisted $N=2$ superconformal algebra $\mathcal{T}$ of central charge $c\in\C$ is a weak $\psi$-twisted $\bar{M}(c,0,0)$-module,  where 
		$$Y_M(L_{-2}{\bf 1},z)=L(z),\ Y_M(J_{-1}{\bf 1},z)=T(z),\ Y_M(G^{(1)}_{-\frac{3}{2}}{\bf 1},z)=G^{+}(z),\ Y_M(G^{(2)}_{-\frac{3}{2}}{\bf 1},z)=G^{-}(z),$$
		and vice versa.
	\end{prop}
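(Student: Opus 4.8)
The plan is to derive the statement from Li's theory of twisted local systems of vertex operators (c.f.\ \cite{L2}), from Lemma \ref{2.4}, and from the realization of $\bar M(c,0,0)$ recalled above; indeed both implications are already implicit in the discussion preceding the statement, and the task is mainly to organize it.

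For the forward direction I would begin with a restricted $\mathcal T$-module $M$ of central charge $c$ and first verify that $L(z),T(z),G^{+}(z),G^{-}(z)$, together with $I(z)=\mathrm{Id}_M$, form a mutually local set of $\psi$-twisted vertex operators on $M$: the required (anti)commutator formulas are exactly the $\delta$-function identities displayed above, which are equivalent to the defining relations (\ref{def1.1}) of $\mathcal T$, while restrictedness supplies the lower truncation condition. Then I would invoke the twisted local system construction to obtain a vertex superalgebra $V$ generated by these fields, on which $M$ is a faithful $\psi$-twisted module with vacuum $I(z)$ and $D=\partial/\partial z$. Applying Lemma \ref{2.4} to each generating field upgrades the twisted commutator formulas on $M$ to the untwisted identities (\ref{untwisted rela}) inside $V$; equivalently, the modes of those fields in $V$ satisfy the bracket relations (\ref{N=1}) of $\mathcal A$ with $C$ acting as $c$. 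Hence $V$ is a lowest weight $\mathcal A$-module with lowest weight vector $I(z)$ under $L_m\mapsto L(z)_{m+1}$, $J_r\mapsto T(z)_r$, $G^{(1)}_{p+\frac{1}{2}}\mapsto G^{+}(z)_{p+1}$, $G^{(2)}_{q+\frac{1}{2}}\mapsto G^{-}(z)_{q+1}$, $C\mapsto c$, and therefore a quotient of the Verma module $M(c,0,0)$.

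To see that this quotient in fact factors through $\bar M(c,0,0)$, I would note that the singular vector $G^{(1)}_{-\frac{1}{2}}\mathbf{1}$ corresponds under the above identification to $G^{+}(z)_0\cdot I(z)$, which vanishes by the creation axiom; and since $[J_0,G^{(1)}_{-\frac{1}{2}}]$ is a nonzero multiple of $G^{(2)}_{-\frac{1}{2}}$, one has $\langle G^{(1)}_{-\frac{1}{2}}\mathbf{1}\rangle=\langle G^{(2)}_{-\frac{1}{2}}\mathbf{1}\rangle=\langle G^{+}_{-\frac{1}{2}}\mathbf{1}\rangle+\langle G^{-}_{-\frac{1}{2}}\mathbf{1}\rangle$ in $M(c,0,0)$ (using the substitutions (\ref{subs})), so both singular vectors die in $V$. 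Thus $V$ is a vertex operator superalgebra and a homomorphic image of $\bar M(c,0,0)$, and pulling the $\psi$-twisted $V$-module structure back along the surjection $\bar M(c,0,0)\twoheadrightarrow V$ exhibits $M$ as a weak $\psi$-twisted $\bar M(c,0,0)$-module with the stated vertex operators. For the converse I would run this backwards: given a weak $\psi$-twisted $\bar M(c,0,0)$-module $M$, put $L(z)=Y_M(L_{-2}\mathbf{1},z)$, $T(z)=Y_M(J_{-1}\mathbf{1},z)$, $G^{+}(z)=Y_M(G^{(1)}_{-\frac{3}{2}}\mathbf{1},z)$, $G^{-}(z)=Y_M(G^{(2)}_{-\frac{3}{2}}\mathbf{1},z)$; since these four vectors generate $\bar M(c,0,0)$ and satisfy (\ref{untwisted rela}) there, the $\psi$-twisted Jacobi identity together with Lemma \ref{2.4} forces their components on $M$ to obey (\ref{def1.1}) with $C$ acting as $c$, and the lower truncation axiom for twisted modules yields precisely restrictedness; the two constructions are visibly mutually inverse.

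The main obstacle I expect is the bookkeeping when passing between the $\psi$-twisted fields on $M$ and the untwisted $N=2$ fields of $\mathcal A$: one must keep careful track of the half-integer moding contributed by the $\psi$-eigenvalue $-1$ subspace (the fields $T(z)$ and $G^{-}(z)$, coming from $J$ and $G^{(2)}$), of the resulting $(z_2/z_1)^{1/2}$ corrections in the twisted $\delta$-identities appearing in Lemma \ref{2.4}, and of the sign conventions in the fermionic locality of $G^{+}(z)$ and $G^{-}(z)$. Once the order $2$ case of Lemma \ref{2.4} is set up with these conventions, the remaining verifications are routine.
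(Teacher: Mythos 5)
Your proposal is correct and follows essentially the same route as the paper: both build the vertex superalgebra $V$ from the twisted local system generated by $L(z),T(z),G^{\pm}(z)$ on $M$, use Lemma \ref{2.4} to pass from the twisted commutator identities on $M$ to the untwisted relations (\ref{untwisted rela}) in $V$, identify $V$ as a lowest weight $\mathcal{A}$-module on which the singular vectors $G^{(1)}_{-\frac{1}{2}}\mathbf{1}, G^{(2)}_{-\frac{1}{2}}\mathbf{1}$ vanish, and hence realize $M$ as a weak $\psi$-twisted $\bar M(c,0,0)$-module. Your explicit justification that the singular vectors die (via the creation axiom and the $J_0$-action) fills in a step the paper only asserts parenthetically, and your treatment of the converse is likewise a correct elaboration of what the paper leaves implicit.
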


	\section{Construction of   simple  $\T$-modules}\label{sec4}
	In this section, we present a construction for  simple  restricted modules  over the  twisted  $N=2$  superconformal algebra. From now on, we use the notations and relations (\ref{def1.1}) of $\mathcal{T}$.
	
	We adopt some notations and ideas initially from \cite{ALZ} and \cite{MZ}. Denote  $\mathbf{H}$  the set of all infinite vectors of the form $\mathbf{i}:=(\ldots, i_2, i_1)$ with entries in $\N$,
	satisfying the condition that the number of nonzero entries is finite.
	For
	$k\in\Z_+$, write  $\epsilon_k=(\ldots,\delta_{k,3},\delta_{k,2},\delta_{k,1})$   and $\mathbf{0}=(\ldots, 0, 0)$. Denote
	$$\mathbf{w}(x)=p \quad  \mathrm{if}\quad  0\neq x\in U(\T)_{-p},\quad \forall p\in\frac{1}{2}\Z.$$  For    $\mathbf{i}\in \mathbf{H}$,   set
	\begin{eqnarray}\label{w12112}
		w_{\mathbf{i}}=\cdots \big(G_{-\frac{n-1}{2}}^{i_{2n}}T_{{-n+\frac{1}{2}}}^{i_{2n-1}}\big) \cdots \big(G_{-1}^{i_{6}}T_{{-\frac{5}{2}}}^{i_{5}}\big)\big(G_{-\frac{1}{2}}^{i_4}T_{{-\frac{3}{2}}}^{i_{3}}\big)
		\big(G_0^{i_2}T_{{-\frac{1}{2}}}^{i_1}\big)\in U(\T_-\oplus\T_0),
	\end{eqnarray}
	where $i_{2n},i_{2n-1}\in\N$, $n\in\Z_+$.
	Then $$\mathbf{w}(\mathbf{i}):=\mathbf{w}(w_{\mathbf{i}})
	=\sum_{n=1}^{+\infty}(n-\frac{1}{2})i_{2n-1}+\sum_{n=1}^{+\infty}\frac{n-1}{2}i_{2n}.$$
	Let
	\begin{eqnarray*}
		&&\mathbf{d}(\mathbf{i}):=\mathbf{d}(w_{\mathbf{i}})
		=\sum_{n=1}^{+\infty}(i_{2n-1}+i_{2n}).
	\end{eqnarray*}

	Now we  define the total order   on the set $\mathbf{H}$ (see \cite{MZ}).
	\begin{defi}\rm\label{defin421}
		Denote by $>$ the   {\em reverse  lexicographical  total order}  on  $\mathbf{H}$,  defined as follows:
		\begin{itemize}
			\item[{\rm (1)}]
			$\mathbf{0}$ is the minimum element;
			\item[{\rm (2)}]
			for different nonzero $\mathbf{i},\mathbf{j}\in\mathbf{H}$, we have
			$$\mathbf{j} > \mathbf{i} \ \Longleftrightarrow
			\ \mathrm{ there\ exists} \ m\in\Z_+ \ \mathrm{such \ that} \ (j_n=i_n,\ \forall 1\leq n<m) \ \mathrm{and} \ j_m>i_m.$$
		\end{itemize}
	\end{defi}
	\begin{defi}\rm
		The above reverse lexicographical total order   can be extended to   {\em  principal total order}  $\succ$ on $\mathbf{H}$: for different  $\mathbf{i},\mathbf{j}\in \mathbf{H}$, set $\mathbf{i}\succ\mathbf{j}$ if and only if one of the following conditions is satisfied:
		\begin{itemize}
			\item[{\rm (1)}]  $\mathbf{w}(\mathbf{i})> \mathbf{w}(\mathbf{j})$;

			\item[{\rm (2)}] $\mathbf{w}(\mathbf{i})= \mathbf{w}(\mathbf{j})$ and $\mathbf{d}(\mathbf{i})> \mathbf{d}(\mathbf{j})$;
			
			\item[{\rm (3)}] $\mathbf{w}(\mathbf{i})= \mathbf{w}(\mathbf{j})$, $\mathbf{d}(\mathbf{i})= \mathbf{d}(\mathbf{j})$
			and $\mathbf{i}>\mathbf{j}$.
		\end{itemize}
		
	\end{defi}
	If a simple module   over  $\T$ or one of its  subalgebra  containing the central element $C$,
	we always assume that the action  of $C$  is a scalar  $c$.
	Denote $\b:=\T_+$. Let $M$ be a simple module for $\b$. Then we have the following induced   $\T$-module
	$$\mathrm{Ind}_{\b,c}(M)=U(\T)\otimes_{U(\b)}M.$$
	Using the $\mathrm{PBW}$ Theorem (see \cite{CW}), and $L_{-m}=(-1)^{m}(G_{-\frac{m}{2}})^2$  for $m\in\N$, every element of $\mathrm{Ind}_{\b,c}(M)$ can be uniquely written as
	the following form
	\begin{equation}\label{def2.1}
		\sum_{\mathbf{i}\in\mathbf{H}}v_{\mathbf{i}} \kappa_{\mathbf{i},c},
	\end{equation}
	where $v_{\mathbf{i}}$ is defined as \eqref{w12112},    $\kappa_{\mathbf{i},c}\in M$ and only finitely many of them are nonzero. For any $w\in\mathrm{Ind}_{\b,c}(M)$ as in  \eqref{def2.1}, we denote by $\mathrm{supp}(w)$ the set of all $\mathbf{i}\in \mathbf{H}$  such that $\kappa_{\mathbf{i},c}\neq0$.
	For a nonzero $w\in \mathrm{Ind}_{\b,c}(M)$,
	we write $\mathrm{deg}(w)$  the maximal element in $\mathrm{supp}(w)$ by the principal total order on $\mathbf{H}$,
	which is called the {\em degree} of $w$. Note that $\mathrm{deg}(w)$ is defined only for  $w\neq0$.
	Let
	$\mathbf{w}(w)=\mathbf{w}(\mathrm{deg}(w))$ and $\mathbf{w}(0)=-\infty$.
	For any $p\in\frac{1}{2}\N$, set
	$$\mathrm{supp}_p(w)=\{\mathbf{i}\in \mathrm{supp}(w)\mid \mathbf{w}(\mathbf{i})=p\}.$$

	Now, let us give a  characterization for a simple $\b$-module.
	\begin{lemm}\label{lemm31}
		Let $m\in\Z$, $r,t\in\frac{1}{2}+\mathbb{Z}_+$,    $c\in\C$, $p,p^\prime\in\frac{1}{2}\Z_+$  and $M$ be a  $\b$-module.
		\begin{itemize}
			\item[{\rm (1)}] Assume that  $M$ is a simple module. If $L_{m}M=T_{r}M=0$ for all $m\geq t+\frac{1}{2}$, $r\geq t+1$,
			we obtain
			$G_{p}M=0$ for all $p\geq t+\frac{1}{2}$.
			\item[{\rm (2)}] If  $G_{p}M=0$,
			we get
			$L_{m}M=T_rM=G_{p^\prime}M=0$ for all $m,r\geq p+\frac{1}{2},p^\prime\geq p$.
		\end{itemize}
	\end{lemm}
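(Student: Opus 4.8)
The plan is to prove both parts by exploiting the $\frac{1}{2}\Z$-grading of $\T$ and the bracket relations in \eqref{def1.1}, using the key structural identity $L_{-m}=(-1)^m (G_{-m/2})^2$ — equivalently $2L_n = G_p^2$ (up to sign and central term) whenever $2p=n$, which comes directly from the $[G_p,G_q]$ relation with $p=q$. Both statements are really about propagating a vanishing condition upward through the grading.

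For part (2), suppose $G_pM=0$ for a fixed $p\in\frac12\Z_+$. I would first observe that for any $p'\ge p$ with $p'-p\in\frac12\Z$, the element $G_{p'}$ lies in the bracket $[L_m, G_p]$ or $[T_r,G_p]$ or can be obtained by repeatedly bracketing $G_p$ with positive $L$'s: indeed $[L_{p'-p},G_p]=(\frac{p'-p}{2}-p)G_{p'}$, and the coefficient $\frac{p'-p}{2}-p=\frac{p'-3p}{2}$ is nonzero except for $p'=3p$; the finitely many exceptional values are handled by composing two such brackets or using $[T_r,G_p]=\pm G_{r+p}$ type relations to reach them. Since $L_{p'-p}$ and $G_p$ both preserve $M$ and $G_pM=0$, acting on any $v\in M$ gives $G_{p'}(\text{something})$, and a short argument (pick a bracket realizing $G_{p'}$ as a combination of $X G_p$ and $G_p X$ with $X\in\b$) yields $G_{p'}M=0$ for all $p'\ge p$. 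Then $L_m$ for $m\ge p+\frac12$: writing $2p'=m$ with $p'\ge p$ (note $m\ge p+\frac12>p$ forces $p'=m/2\ge p$ once $m\ge 2p$; for $p\le m<2p$ use $2L_m=[G_{m-p},G_p]$ minus a central term, and both $G_{m-p},G_p$ kill $M$ when $m-p\ge$ — wait, need $m-p\ge p$, i.e. $m\ge 2p$ again), so more carefully: $2L_m = [G_{p}, G_{m-p}]$ requires $m-p\ge p$; for the remaining finitely many $m$ with $p+\frac12\le m<2p$ I would instead use $[L_{m-p+\epsilon},\,\cdot\,]$ relations or bracket $G$'s of half-integer index to cover them, again reducing to $G_{p''}M=0$ for suitable $p''\ge p$. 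Similarly $T_r = \pm\mathfrak{i}[G_p,G_{r-p}]$ (the half-integer case of $[G_p,G_q]$) with $r-p\ge p$, plus finitely many small-index fixes; and $G_{p'}M=0$ for $p'\ge p$ was already shown.

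For part (1), assume $M$ is simple and $L_mM=0$ for $m\ge t+\frac12$, $T_rM=0$ for $r\ge t+1$. I want $G_pM=0$ for $p\ge t+\frac12$. The standard trick: let $M'=\{v\in M \mid G_pv=0 \ \forall p\ge t+\frac12\}$. If I show $M'$ is a nonzero $\b$-submodule, then $M'=M$ by simplicity and we are done. Nonzero: take any $0\ne w\in M$; since $\b$ acts and $G_p$ for large $p$ must eventually... actually the cleanest route is to first show $G_p^2 M=0$ for $p\ge t+\frac12$ large (because $G_p^2=\pm L_{2p}$ and $2p\ge 2t+1\ge t+\frac12$ when $t\ge\frac12$, wait need $2p\ge t+\frac12$ which holds), hence $G_p$ acts nilpotently; then using that $G_p$ shifts a suitable grading or using $[G_p,G_q]$ relations to show the kernel is stable. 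For $\b$-stability of $M'$: for $X\in\b$ homogeneous of positive degree $d$ and $v\in M'$, compute $G_p(Xv)=(-1)^{|X|}X(G_pv)+[G_p,X]v = [G_p,X]v$ since $G_pv=0$; and $[G_p,X]$ is a combination of $G_{p+d}$ (if $X$ is $L$ or $T$) with $p+d>p\ge t+\frac12$, or of $L_{p+d}$, $T_{p+d}$ (if $X=G$), which kill $v$ by hypothesis — so $G_p(Xv)=0$, giving $Xv\in M'$. The $\b$-module $M'$ is thus a submodule; its nonvanishing follows because the $L_mM=T_rM=0$ hypotheses already force a large chunk of negative-degree-built vectors into $M'$, and $M$ is nonzero.

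The main obstacle I anticipate is the bookkeeping around the finitely many "small" indices — the coefficients $\frac{m}{2}-p$ in $[L_m,G_p]$ vanish at $m=2p$, and the decomposition $2L_n=[G_p,G_{n-p}]$ only applies when both summand-indices are $\ge p$ (so the range $p+\frac12\le n<2p$ needs separate treatment), and likewise one must check the central terms $\frac{4p^2-1}{12}\delta_{p+q,0}C$ and $\frac{r}{3}\delta_{r+s,0}C$ never interfere since we stay away from index $0$ throughout (all relevant indices are strictly positive). None of this is deep, but it requires a careful case split to confirm every $L_m,T_r,G_p$ with index above the stated bound is reached. Everything else is a routine application of the grading and the bracket relations, together with simplicity of $M$ in part (1).
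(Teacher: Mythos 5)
Your part (2) is essentially correct and uses the same mechanism as the paper: push the vanishing of $G_p$ upward by bracketing with elements of $\b$ that preserve $M$, then recover $L_m$ and $T_r$ from brackets of two $G$'s. The paper's bookkeeping is cleaner than what you sketch: it climbs in steps of $\frac12$ via $G_{p'+\frac12}=[T_{\frac12},G_{p'}]$ (coefficient $1$, so none of your exceptional indices like $p'=3p$ arise), and then obtains every $L_m$, $T_r$ with $m,r\ge p+\frac12$ from $[G_{\frac12},G_{p'}]$ with $p'=m-\frac12$ resp.\ $p'=r-\frac12$. The point you half-see but should make explicit is that only \emph{one} of the two $G$'s in such a bracket needs to annihilate $M$; the other, $G_{\frac12}\in\b$, merely has to preserve it. This dissolves your worry about the range $p+\frac12\le m<2p$ entirely.

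The genuine gap is in part (1): you never establish that $M'=\bigcap_{p\ge t+\frac12}\ker(G_p|_M)$ is nonzero, and this is not automatic. The operators $G_p|_M$ for $p\ge t+\frac12$ do square to zero and pairwise anticommute on $M$ (their brackets are $L$'s and $T$'s of large index, which vanish by hypothesis), but an infinite family of anticommuting square-zero operators can have trivial common kernel --- e.g.\ left multiplication by the generators $e_1,e_2,\dots$ on the exterior algebra $\Lambda(e_1,e_2,\dots)$ --- so "each $G_p$ acts nilpotently" plus "$M\neq 0$" does not suffice, and your appeal to "negative-degree-built vectors" has no meaning for an abstract $\b$-module. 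The missing step is precisely the Claim in the paper's proof: for $q$ strictly above the threshold one writes $G_q=[T_{q-\frac12},G_{\frac12}]$ (integer $q$) or $G_q=\tfrac{4}{2q-3}[L_{q-\frac12},G_{\frac12}]$ (half-integer $q$), and since $T_{q-\frac12}$, $L_{q-\frac12}$ annihilate $M$ by hypothesis while $G_{\frac12}\in\b$ preserves $M$, one gets $G_qM=0$ directly. After that, essentially only $G_{t+\frac12}$ is in question, and its kernel contains its image, which is nonzero unless $G_{t+\frac12}M=0$ already. Note the paper argues dually in any case: it shows the \emph{image} $Q=G_pM$ is a $\b$-submodule (stability requires the same Claim) that is proper because $G_pQ=G_p^2M=\pm L_{2p}M=0$, and then invokes simplicity to conclude $Q=0$, one $p$ at a time. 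Whichever variant you choose, the bracket computation producing $G_q$ from $T_{q-\frac12}$ or $L_{q-\frac12}$ together with $G_{\frac12}$ is indispensable, and it is absent from your proposal.
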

	\begin{proof}
		{\rm (1)}
		Take any $p\in\frac{1}{2}\N$ with $p\geq t+\frac{1}{2}$.
		From $G_{p}^2M=(-1)^{2p}L_{2p}M=0$, we check that  $Q=G_{p}M$ is a subspace
		of $M$ and $Q\neq M$. We   claim $G_{p+p^\prime}M=0$ for $p^\prime\in\frac{1}{2}\Z_+$. It is clear that $p+p^\prime>t+\frac{1}{2}$. If $p+p^\prime\in\Z_+$, we obtain $G_{p+p^\prime}M=(T_{p+p^\prime-\frac{1}{2}}G_{\frac{1}{2}}
		-G_{\frac{1}{2}}T_{p+p^\prime-\frac{1}{2}})M=0$.
		If $p+p^\prime\in\frac{3}{2}+\Z_+$, one can see that  $G_{p+p^\prime}M=\frac{4}{2p+2p^\prime-3}(L _{p+p^\prime-\frac{1}{2}}G_{\frac{1}{2}}M
		-G_{\frac{1}{2}}L_{p+p^\prime-\frac{1}{2}}M)=0$.  Then for $m\in\N, s\in\frac{1}{2}+\N, q\in\frac{1}{2}\N$, we get
		\begin{eqnarray*}
			&&   L_{m}Q=L_{m}G_{p}M=(G_{p}L_{m}+(\frac{m}{2}-p)G_{m+p})M\subset Q,
			\\&&T_{s}Q=(G_{p}T_{s}+G_{p+s})M\subset Q,
			\\&&
			G_{q}Q=G_{q}G_{p}M=\left\{\begin{array}{llll}
				(-G_{p}G_{q}+(-1)^{2q}2L_{p+q})M  \subset Q&\mbox{if}\
				p+q\in\N,\\[4pt]
				(-G_{p}G_{q}+(-1)^{2q+1}(q-p)T_{p+q})M \subset Q&\mbox{if}\
				p+q\in\frac{1}{2}+\N.
			\end{array}\right.
		\end{eqnarray*}
		This implies that $Q$ is a proper submodule of $M$. By the simplicity of $M$, we have $Q=G_{p}M=0$ for all $p\geq t+\frac{1}{2}$.
		
		{\rm (2)} Take a $p^\prime\in\frac{1}{2}\mathbb{Z}_+$ with $p^\prime\geq p$.  It is clear that $G_{p+\frac{1}{2}}M=[T_{\frac{1}{2}},G_p]M=0$.
		Using recursive method, we check that $G_{p^\prime}M=0$ for all $p^\prime\geq p$.
		Then by \eqref{def1.1}, we have  \begin{equation*}
			\aligned
			&
			0= [G_\frac{1}{2},G_{p^\prime}]M= \left\{\begin{array}{llll}
				-2L_{\frac{1}{2}+p^\prime}M&\mbox{if}\
				\frac{1}{2}+p^\prime\in\Z,\\[4pt]
				(\frac{1}{2}-p^\prime)T_{\frac{1}{2}+p^\prime}M&\mbox{if}\
				p^\prime\in\Z.
			\end{array}\right.
			\endaligned
		\end{equation*}
		The lemma holds.
	\end{proof}

	In the rest of this section, we always set that there exists  $u\in\frac{1}{2}+\Z_+$  such that the $\b$-module $M$   satisfies   the following two conditions:
	\begin{itemize}
		\item[{\rm (i)}] the action of $T_{u}$ on   $M$  is  injective;
		\item[{\rm (ii)}]  $G_{u}M=0$.
	\end{itemize}
	\begin{rema}\label{remar23111}
		From Lemma \ref{lemm31} {\rm (2)} and {\rm (ii)}, one gets
		\begin{eqnarray*}
			L_{m}M=T_{r}M=G_pM=0 \quad \mathrm{for\ all}\ m\geq u+\frac{1}{2}, r\geq u+1,p\geq u.
		\end{eqnarray*}
	\end{rema}
	\begin{lemm}\label{lemm3231}
		Let $m\in\Z, r\in\frac{1}{2}+\Z_+, \varrho\in\frac{1}{2}\Z, c\in\C$. Assume that  $M$ is a  simple $\b$-module and there exists $u\in\frac{1}{2}+\Z_+$ such that $M$ satisfies  the conditions  {\rm (i)} and {\rm (ii)}. For $\varrho\geq u$, let
		$X_\varrho=\left\{\begin{array}{llll}
			L_\varrho \ \mathrm{or}\ G_\varrho&\mbox{if}\
			\varrho\in\Z_+,\\[4pt]
			G_\varrho&\mbox{if}\
			\varrho\in\frac{1}{2}+\Z_+.
		\end{array}\right.$
		Then for any $w\in\mathrm{Ind}_{\b,c}(M)$ with $\mathbf{w}(w)=q\in \frac{1}{2}\Z_+$, we get
		\begin{itemize}
			\item[{\rm (1)}]  $\mathrm{supp}_{q-\varrho+u}(X_\varrho w)\subset\big\{\mathbf{i}-\mathbf{j}\mid \mathbf{i}\in\mathrm{supp}_{q-x}(w),\mathbf{w}(\mathbf{j})=\varrho-u-x,0\leq x\leq \varrho-u\big\}$;
			\item[{\rm (2)}] $\mathbf{w}(X_\varrho w)\leq q-\varrho+u$.
		\end{itemize}
	\end{lemm}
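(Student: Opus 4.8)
The plan is to reduce to a single basis element and then run an induction on the number of tensor factors $\mathbf{d}(\mathbf{i})$, carrying a couple of companion statements so that the induction closes. By linearity, and using $\mathrm{supp}_p(w+w')\subseteq\mathrm{supp}_p(w)\cup\mathrm{supp}_p(w')$ together with $\mathbf{w}(w+w')\le\max\{\mathbf{w}(w),\mathbf{w}(w')\}$, it is enough to prove, for each fixed $\mathbf{i}\in\mathbf{H}$ and $\kappa\in M$, that $X_\varrho w_{\mathbf{i}}\kappa$ has weight $\le\mathbf{w}(\mathbf{i})-\varrho+u$ and that its weight-$(\mathbf{w}(\mathbf{i})-\varrho+u)$ layer is supported on $\{\mathbf{i}-\mathbf{j}\mid\mathbf{j}\le\mathbf{i},\ \mathbf{w}(\mathbf{j})=\varrho-u\}$; since this bound is strictly increasing in $\mathbf{w}(\mathbf{i})$, only the summands with $\mathbf{w}(\mathbf{i})=q$ can contribute to the top layer $\mathrm{supp}_{q-\varrho+u}$, and summing over $\mathbf{i}\in\mathrm{supp}(w)$ then recovers both (1) and (2). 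I would start by recording that $\varrho\ge u$ forces $X_\varrho M=0$ by Remark~\ref{remar23111} (if $\varrho\in\Z_+$ then $X_\varrho=L_\varrho$ with $\varrho\ge u+\tfrac12$; if $X_\varrho=G_\varrho$ then $\varrho\ge u$), which also disposes of the base case $\mathbf{d}(\mathbf{i})=0$, where $X_\varrho w_{\mathbf{0}}\kappa=X_\varrho\kappa=0$.

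Commuting $X_\varrho$ past a factor of $w_{\mathbf{i}}$ creates a single generator of weakly lower degree, and when that degree lands exactly at $u$ one obtains a copy of $T_u$, which by hypothesis (i) does \emph{not} kill $M$; so the induction must be run simultaneously with two companions, all by induction on $\mathbf{d}(\mathbf{i})$: \emph{(a)} for $Y$ one of $L_d,G_d,T_d$ of degree $d\ge u$ (so $YM=0$ unless $Y=T_u$), $\mathbf{w}(Yw_{\mathbf{i}}\kappa)\le\mathbf{w}(\mathbf{i})-d+u$ with top layer on $\{\mathbf{i}-\mathbf{j}\mid\mathbf{w}(\mathbf{j})=d-u\}$; and \emph{(b)} for any single generator $Z$ of positive degree, $\mathbf{w}(Zw_{\mathbf{i}}\kappa)\le\mathbf{w}(\mathbf{i})$, the extreme case being the term in which $Z$ slides past all factors onto $\kappa$. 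Statement (b) is self-contained (reorder $Zw_{\mathbf{i}}$ by carrying $Z$ to the right, each bracket merging the single active operator with a factor, and note that the lowering part of the result never acquires weight exceeding $\mathbf{w}(\mathbf{i})$); statement (a) is proved alongside the main claim by the same scheme, with the sub-case $Y=T_u$ absorbed into (b) since there $\mathbf{w}(\mathbf{i})-d+u=\mathbf{w}(\mathbf{i})$.

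For the inductive step I would write $w_{\mathbf{i}}=f\,w_{\mathbf{i}'}$ with $f$ the left-most factor of the normal form (\ref{w12112}), so $\mathbf{d}(\mathbf{i}')=\mathbf{d}(\mathbf{i})-1$ and $\mathbf{w}(\mathbf{i})=\mathbf{w}(f)+\mathbf{w}(\mathbf{i}')$, and expand $X_\varrho w_{\mathbf{i}}\kappa=\pm f\,(X_\varrho w_{\mathbf{i}'}\kappa)+[X_\varrho,f]\,w_{\mathbf{i}'}\kappa$, the sign being the Koszul one. For the first summand, apply the inductive hypothesis to $X_\varrho w_{\mathbf{i}'}\kappa$ and left-multiply by $f$: because $f$ is left-most and commuting $X_\varrho$ into $w_{\mathbf{i}'}$ produces no factor of level exceeding that of $f$, prepending $f$ simply augments each index vector by $\mathbf{e}_f$ (the index vector with a single $1$ in the slot of $f$), so this summand has weight $\le\mathbf{w}(f)+(\mathbf{w}(\mathbf{i}')-\varrho+u)=\mathbf{w}(\mathbf{i})-\varrho+u$ with top layer on $\{\mathbf{i}-\mathbf{j}'\mid\mathbf{w}(\mathbf{j}')=\varrho-u\}$. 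For the second summand, write $[X_\varrho,f]=c_fY+d_fC$ with $Y$ a single generator of degree $\delta:=\varrho-\mathbf{w}(f)\le\varrho$, a central term being present only when $\delta=0$ and contributing weight $\mathbf{w}(\mathbf{i})-\varrho$; then if $\delta\le0$, $Y$ is factor-type (use $L_0=G_0^2$) and $Yw_{\mathbf{i}'}\kappa$ re-normal-orders into weight $\mathbf{w}(\mathbf{i})-\varrho$; if $0<\delta<u$, then (b) gives weight $\le\mathbf{w}(\mathbf{i}')<\mathbf{w}(\mathbf{i})-\varrho+u$; and if $\delta\ge u$, then (a) gives weight $\le\mathbf{w}(\mathbf{i}')-\delta+u=\mathbf{w}(\mathbf{i})-\varrho+u$ with top layer on $\{\mathbf{i}'-\mathbf{j}''\mid\mathbf{w}(\mathbf{j}'')=\delta-u\}=\{\mathbf{i}-(\mathbf{e}_f+\mathbf{j}'')\}$, and $\mathbf{w}(\mathbf{e}_f+\mathbf{j}'')=\mathbf{w}(f)+(\delta-u)=\varrho-u$, exactly as claimed. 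Assembling the three contributions yields (1) and (2), and every bracket used is read off from (\ref{def1.1}).

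The one genuinely delicate point — and where I expect the main obstacle to lie — is pinning down which reductions attain the extremal weight $\mathbf{w}(\mathbf{i})-\varrho+u$: these are precisely the histories in which $X_\varrho$ is converted, through brackets that consume factors of total weight $\varrho-u$, into a copy of $T_u$ that then acts nontrivially on $\kappa$, so this is exactly where the injectivity of $T_u$ in condition (i) enters, and where one must verify that the index vector changes by subtracting an element of $\mathbf{H}$ of weight $\varrho-u$. The remainder is bookkeeping: keeping the interaction of weight, $\mathbf{d}$-value, and normal-ordering corrections consistent across the simultaneous induction.
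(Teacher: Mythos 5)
Your argument is correct and is essentially the paper's own proof: both rest on commuting $X_\varrho$ rightward through the PBW monomial $v_{\mathbf{i}}$ and tracking the $\frac{1}{2}\Z$-grading of $U(\T)$, with the extremal weight $q-\varrho+u$ attained precisely when the positive-degree remnant that finally reaches $\kappa_{\mathbf{i},c}$ is $T_u$ (injective by condition (i)), every other remnant of degree at least $u$ annihilating $M$ by Remark \ref{remar23111}. The paper compresses this into the single identity \eqref{113.1222}, whereas you organize it as an induction on $\mathbf{d}(\mathbf{i})$ with companion estimates; the only cosmetic imprecision is that prepending the leftmost factor $f$ may force a re-normal-ordering in the sub-top weight layers, which preserves the grading and so affects neither (1) nor (2).
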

	\begin{proof}
		{\rm (1)}
		We may suppose that $w=v_{\mathbf{i}}\kappa_{\mathbf{i},c}$ with $\mathbf{w}(\mathbf{i})=q$. For   $\varrho \geq u$ and any fixed $X_\varrho$,  by using the relations in \eqref{def1.1}, we can shift the only positive degree term in $[X_\varrho, v_{\mathbf{i}}]$ to the right side, namely,
		$[X_\varrho,v_{\mathbf{i}}]\in \sum_{i\in\{2(q-\varrho),\ldots,2q\}}U(\T_{-})_{-\frac{i}{2}}\T_{-q+\varrho+\frac{i}{2}}$. So
		\begin{eqnarray}\label{113.1222}
			X_\varrho w=[X_\varrho,v_{\mathbf{i}}]\kappa_{\mathbf{i},c}=v_{\varrho-q}\kappa_{\mathbf{i},c}+
			\sum_{\mathbf{j}\in
				\{\mathbf{k}\mid \mathbf{i}-\mathbf{k}\in \mathbf{H},0\leq\mathbf{w}(\mathbf{k})\leq\varrho-\frac{1}{2}\}}v_{\mathbf{i}-\mathbf{j}}\kappa_{\mathbf{j},c}
		\end{eqnarray}
		for some $v_{\varrho-q}\in U(\T)_{\varrho-q}$.
		
		{\rm (2)} Based on  {\rm (1)}, we deduce  $\mathbf{w}(X_\varrho w)\leq\mathbf{w}(w)-\varrho+u$. The lemma holds.
	\end{proof}
	\begin{lemm}\label{lemm33}
		Let $c\in\C$ and $\mathbf{i}\in\mathbf{H}$
		with $\widehat{n}=\mathrm{min}\{k\mid i_k\neq0\}>0$.  Let $M$ be a simple $\b$-module  and  there exists $u\in\frac{1}{2}+\Z_+$ such that $M$
		satisfies the conditions  {\rm (i)} and {\rm (ii)}.
		Thus,
		\begin{itemize}
			\item[{\rm (1)}] if $\widehat{n}=2n-1$ for some $n\in\Z_+$, we have
			\begin{itemize}
				\item[{\rm (a)}]  $\mathrm{deg}(L_{n-\frac{1}{2}+u}v_{\mathbf{i}}\kappa_{\mathbf{i},c})=\mathbf{i}-\epsilon_{\widehat{n}}$;
				\item[{\rm (b)}]
				$\mathbf{i}-\epsilon_{\widehat{n}}\notin\mathrm{supp}
				\big(L_{n-\frac{1}{2}+u}v_{\widetilde{\mathbf{i}}}\kappa_{\widetilde{\mathbf{i}},c}\big)$ for all $\mathbf{i}\succ\widetilde{\mathbf{i}}$.
			\end{itemize}
			\item[{\rm (2)}]
			if $\widehat{n}=2n$ for some $n\in\Z_+$, we have
			\begin{itemize}
				\item[{\rm (a)}]  $\mathrm{deg}(G_{u+\frac{n-1}{2}}v_{\mathbf{i}}\kappa_{\mathbf{i},c})=\mathbf{i}-\epsilon_{\widehat{n}}$;
				\item[{\rm (b)}]
				$\mathbf{i}-\epsilon_{\widehat{n}}\notin\mathrm{supp}
				\big(G_{u+\frac{n-1}{2}}v_{\widetilde{\mathbf{i}}}\kappa_{\widetilde{\mathbf{i}},c}\big)$ for all $\mathbf{i}\succ\widetilde{\mathbf{i}}$.
			\end{itemize}
		\end{itemize}
	\end{lemm}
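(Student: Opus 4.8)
The plan is to compute the action of $L_{n-\frac12+u}$ (resp. $G_{u+\frac{n-1}{2}}$) on the monomial $v_{\mathbf{i}}\kappa_{\mathbf{i},c}$ directly, keeping careful track of the principal total order $\succ$. Recall $\widehat n=\min\{k\mid i_k\neq 0\}$, so $v_{\mathbf{i}}$ ends in a factor $\big(G_{-\frac{n-1}{2}}^{i_{2n}}T_{-n+\frac12}^{i_{2n-1}}\big)$ in which $T_{-n+\frac12}$ (case $\widehat n=2n-1$) or $G_{-\frac{n-1}{2}}$ (case $\widehat n=2n$) carries the lowest-degree generator actually present. The operator chosen is exactly the one that pairs with this lowest factor: by $[L_{n-\frac12+u},T_{-n+\frac12}]=(n-\frac12)T_u$ in case (1), and $[G_{u+\frac{n-1}{2}},G_{-\frac{n-1}{2}}]=(-1)^{n-1}(2L_u$ or $\pm(\cdots)T_u)$ — more precisely the bracket landing on a $T_u$ term, since $u+\frac{n-1}{2}+(-\frac{n-1}{2})=u\in\frac12+\Z$ — in case (2). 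In both cases commuting the operator through $v_{\mathbf{i}}$ and past all the higher factors produces, as its $\succ$-leading contribution, the monomial $v_{\mathbf{i}-\epsilon_{\widehat n}}$ times a (nonzero scalar) $T_u$ acting on $\kappa_{\mathbf{i},c}$, plus lower-order terms; here the injectivity of $T_u$ from condition~(i) is what guarantees $T_u\kappa_{\mathbf{i},c}\neq 0$, hence $\mathbf{i}-\epsilon_{\widehat n}\in\mathrm{supp}$.

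For part (a) I would argue as follows. Write $v_{\mathbf{i}}=v'\cdot Z^{i_{\widehat n}}$ where $Z$ is the lowest generator and $v'$ collects the strictly higher factors (all of $\T_-\oplus\T_0$-degree $\le -\widehat n/2$ or $0$). The selected operator $X$ satisfies $\mathbf{w}(X)=-(\text{degree of }Z)-u$ with $u>0$, so by Lemma~\ref{lemm3231}(2) any term in which $X$ is pushed past $v'$ to hit the $Z^{i_{\widehat n}}$ block and then the module yields $\mathbf{w}\le \mathbf{w}(\mathbf{i})-\mathbf{w}(X)$, while the genuine ``top'' term comes from $X$ annihilating one copy of $Z$, producing $T_u$ and leaving $v'Z^{i_{\widehat n}-1}=v_{\mathbf{i}-\epsilon_{\widehat n}}$. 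All other terms either reduce $\mathbf{d}$ (when $X$ commutes past $v'$ nontrivially, replacing two generators by one of the same total weight but lower $\mathbf{d}$, or by creating positive-degree generators that then annihilate $M$ via Remark~\ref{remar23111}), or are $\succ$-smaller by the reverse-lexicographic tie-break. Hence $\mathrm{deg}=\mathbf{i}-\epsilon_{\widehat n}$, the weight dropping by exactly $\mathbf{w}(Z)$ minus $u$ — wait, by exactly $(\widehat n\text{-th weight}) - u$ — but since we also add back $u$ from $T_u$, the bookkeeping must be done at the level of $\mathrm{Ind}_{\b,c}(M)$ where the module factor carries no degree; I would state it cleanly as $\mathbf{w}(\mathbf{i}-\epsilon_{\widehat n})=\mathbf{w}(\mathbf{i})-\mathbf{w}(Z)$ and confirm this matches the $\mathbf{w}$-drop predicted by Lemma~\ref{lemm3231}(2).

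For part (b), suppose $\mathbf{i}\succ\widetilde{\mathbf{i}}$; I must show $\mathbf{i}-\epsilon_{\widehat n}\notin\mathrm{supp}(X v_{\widetilde{\mathbf{i}}}\kappa_{\widetilde{\mathbf{i}},c})$. By Lemma~\ref{lemm3231}(1), every $\mathbf{k}\in\mathrm{supp}_{\,\mathbf{w}(\mathbf{i})-\mathbf{w}(Z)}(X v_{\widetilde{\mathbf{i}}}\kappa_{\widetilde{\mathbf{i}},c})$ is of the form $\widetilde{\mathbf{i}}-\mathbf{j}$ with $\mathbf{w}(\mathbf{j})=\mathbf{w}(Z)-x$ for some $0\le x\le -\mathbf{w}(Z)-\dots$ — I will use the precise statement there. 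Since $\widetilde{\mathbf i}\prec\mathbf i$, and subtracting a nonnegative vector $\mathbf j$ can only decrease or preserve entries, one checks $\widetilde{\mathbf i}-\mathbf j\prec\mathbf i-\epsilon_{\widehat n}$ unless $x=0$ and $\widetilde{\mathbf i}=\mathbf i-\epsilon_{\widehat n}+\mathbf j$ with $\mathbf w(\mathbf j)=\mathbf w(Z)$; the principal order then forces $\mathbf w(\widetilde{\mathbf i})\ge\mathbf w(\mathbf i)$, and combined with $\widetilde{\mathbf i}\prec\mathbf i$ this pins $\mathbf w(\widetilde{\mathbf i})=\mathbf w(\mathbf i)$, whence $\mathbf d$ and the reverse-lex order are compared, ultimately contradicting either $\widetilde{\mathbf i}\prec\mathbf i$ or the minimality of $\widehat n$. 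The main obstacle I anticipate is precisely this combinatorial bookkeeping in (b): disentangling the three clauses of $\succ$ simultaneously while the vectors $\mathbf j$ range over a constrained set, and making sure the ``diagonal'' term (one copy of $Z$ removed) is strictly separated in $\succ$ from all parasitic terms coming from commutators inside $v'$; the Virasoro case $\widehat n=2n-1$ and the fermionic case $\widehat n=2n$ differ only in which generator $Z$ is and which structure constant appears, so I would treat case (1) in detail and indicate the parallel computation for case (2). Once (b) is established, injectivity of $T_u$ upgrades the leading-coefficient computation in (a) to a genuine nonvanishing statement.
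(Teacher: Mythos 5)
Your strategy is the same as the paper's: the $\succ$-leading term of $Xv_{\mathbf{i}}\kappa_{\mathbf{i},c}$ comes from bracketing the chosen operator with the lowest-index generator, its coefficient is a nonzero multiple of $T_u\kappa_{\mathbf{i},c}$ (nonzero by condition (i)), Lemma \ref{lemm3231} bounds the weight of everything else, and (b) is a case analysis on the three clauses of $\succ$. But there is a concrete gap in your part (1)(a). You assert that every competing term either reduces $\mathbf{d}$, vanishes, or ``is $\succ$-smaller by the reverse-lexicographic tie-break.'' That last clause is exactly where the claim fails: if $i_{4n}>0$, commuting $L_{n-\frac12+u}$ with a copy of $G_{-n+\frac12}$ (index $4n$, weight $n-\frac12$, the same weight as $T_{-n+\frac12}$) produces the monomial $\mathbf{i}-\epsilon_{4n}$, which has the same $\mathbf{w}$ and the same $\mathbf{d}$ as $\mathbf{i}-\epsilon_{2n-1}$ and is strictly \emph{larger} in the reverse lexicographical order, since its entry at position $2n-1$ is $i_{2n-1}$ rather than $i_{2n-1}-1$. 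So the tie-break goes the wrong way: were this term present, it, and not $\mathbf{i}-\epsilon_{\widehat n}$, would be the degree. It disappears only because $[L_{n-\frac12+u},G_{-n+\frac12}]$ is a multiple of $G_u$ and $G_uM=0$ by condition (ii) --- this is the sentence ``$[L_{u+n-\frac12},G_{-n+\frac12}]\kappa_{\mathbf{i},c}=0$'' in the paper's proof. Your parenthetical about ``creating positive-degree generators that then annihilate $M$'' contains the right mechanism, but you file it under ``reduce $\mathbf{d}$'' and never isolate this as the one competitor that the order argument alone cannot dispose of; as written, (1)(a) does not close. (In case (2) no such top-weight competitor arises, which is why the paper's proof of (2)(a) is shorter.)

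Part (b) you leave as an admitted sketch. The paper resolves it by splitting on which clause of $\succ$ makes $\mathbf{i}\succ\widetilde{\mathbf{i}}$: if $\mathbf{w}(\widetilde{\mathbf{i}})<\mathbf{w}(\mathbf{i})$, Lemma \ref{lemm3231}(2) gives a strict weight inequality; if the weights agree and $\mathbf{d}(\widetilde{\mathbf{i}})<\mathbf{d}(\mathbf{i})$, each output term either drops $\mathbf{d}$ below $\mathbf{d}(\mathbf{i}-\epsilon_{\widehat n})$ or preserves $\mathbf{d}$, and a $\mathbf{d}$-preserving commutator must replace a generator of weight $>n-\frac12+u$ by one lighter by exactly $n-\frac12+u$, pushing the weight strictly below $\mathbf{w}(\mathbf{i}-\epsilon_{\widehat n})$; if $\mathbf{w}$ and $\mathbf{d}$ agree, one compares $\widehat n$ with $\widetilde n=\min\{k\mid\widetilde i_k\neq0\}$ and uses part (a) applied to $\widetilde{\mathbf{i}}$ when $\widetilde n=\widehat n$, and the weight bound again when $\widetilde n>\widehat n$. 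Your proposed route through the support description of Lemma \ref{lemm3231}(1) and the equation $\widetilde{\mathbf{i}}=\mathbf{i}-\epsilon_{\widehat n}+\mathbf{j}$ would in the end force you through this same trichotomy, so you save nothing; I would adopt the paper's organization and, in doing so, make explicit use of condition (ii) at the point flagged above.
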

	\begin{proof}
		{\rm (1)} {\rm (a)}
		Write  $L_{u+n-\frac{1}{2}}v_{\mathbf{i}}\kappa_{\mathbf{i},c}$ as the form of \eqref{113.1222}. Since the action of $T_u$ on $M$ is injective, we see that  the only way to obtain
		the term $v_{\mathbf{i}-\epsilon_{\widehat{n}}}\kappa_{\mathbf{i},c}$ is to commute $L_{u+n-\frac{1}{2}}$ with    $T_{-n+\frac{1}{2}}$, which implies $\mathbf{i}-\epsilon_{\widehat{n}}\in\mathrm{supp}
		\big(L_{n-\frac{1}{2}+u}v_{\mathbf{i}}\kappa_{\mathbf{i},c}\big)$. Note that
		$[L_{u+n-\frac{1}{2}},G_{-n+\frac{1}{2}}]\kappa_{\mathbf{i},c}=0.$
		If there exists      $G_{-n}$ in $v_{\mathbf{i}}$ and $G_{u-\frac{1}{2}}\kappa_{\mathbf{i},c}\neq0$,
		then we get
		the term $v_{\mathbf{i}-\epsilon_{4n+2}}\kappa_{\mathbf{i},c}$   by commuting $L_{u+n-\frac{1}{2}}$ with   $G_{-n}$. This leads to
		$$\mathbf{w}(v_{\mathbf{i}-\epsilon_{4n+2}}\kappa_{\mathbf{i},c})=\mathbf{w}(\mathbf{i})-n
		<\mathbf{w}(\mathbf{i})-n+\frac{1}{2}=\mathbf{w}(\mathbf{i}-\epsilon_{\widehat{n}}).$$
		Combining these with Lemma \ref{lemm3231}, we conclude  that $\mathrm{deg}(L_{n-\frac{1}{2}+u}v_{\mathbf{i}}\kappa_{\mathbf{i},c})=\mathbf{i}-\epsilon_{\widehat{n}}$.
		
		{\rm (b)}
		The proof is divided into the following three cases.
		
		Assume that   $\mathbf{w}(\widetilde{\mathbf{i}}) < \mathbf{w}(\mathbf{i})$. It follows from  Lemma \ref{lemm3231} that  we obtain
		$$\mathbf{w}(L_{n-\frac{1}{2}+u}v_{\widetilde{\mathbf{i}}}\kappa_{\widehat{\mathbf{i}},c})
		\leq \mathbf{w}(\widetilde{\mathbf{i}})-n+\frac{1}{2}<\mathbf{w}(\mathbf{i}-\epsilon_{\widehat{n}})=\mathbf{w}(\mathbf{i})-n+\frac{1}{2}.$$
		Hence, {\rm (b)} holds in this case.
		
		Consider $\mathbf{w}(\widetilde{\mathbf{i}})= \mathbf{w}(\mathbf{i})=p\in\frac{1}{2}\Z$ and $\mathbf{d}(\widetilde{\mathbf{i}})< \mathbf{d}(\mathbf{i})$.
		If the element
		$\mathbf{j}\in\mathrm{supp}
		\big(L_{n-\frac{1}{2}+u}v_{\widetilde{\mathbf{i}}}\kappa_{\widetilde{\mathbf{i}},c}\big)$ is satisfying that $\mathbf{d}(\mathbf{j})< \mathbf{d}(\widetilde{\mathbf{i}})$, then
		$$\mathbf{d}(\mathbf{j})< \mathbf{d}(\widetilde{\mathbf{i}})\leq \mathbf{d}(\mathbf{i})-1=\mathbf{d}(\mathbf{i}-\epsilon_{\widehat{n}}),$$
		which implies that $\mathbf{j}\neq  \mathbf{i}-\epsilon_{\widehat{n}}$.
		If $\mathbf{j}\in\mathrm{supp}
		\big(L_{n-\frac{1}{2}+u}v_{\widetilde{\mathbf{i}}}\kappa_{\widetilde{\mathbf{i}},c}\big)$  is satisfying  that $\mathbf{d}(\mathbf{j})=\mathbf{d}(\widetilde{\mathbf{i}})$, then
		such $\mathbf{j}$ can only be obtained by commuting $L_{n-\frac{1}{2}+u}$ with some $T_{-r},r\in\frac{1}{2}+\Z_+$, where $r>n-\frac{1}{2}+u$.
		Then it is easy to see that
		$$\mathbf{w}(\mathbf{j})=\mathbf{w}(\widetilde{\mathbf{i}})-n+\frac{1}{2}-u < \mathbf{w}\widetilde{(\mathbf{i}})-n+\frac{1}{2}= \mathbf{w}(\mathbf{i})-n+\frac{1}{2}=\mathbf{w}(\mathbf{i}-\epsilon_{\widehat{n}}),$$
		which   shows   $\mathbf{j}\neq \mathbf{i}-\epsilon_{\widehat{n}}$. Thus
		{\rm (b)} also holds in this case.
		Let $\widetilde{n}=\mathrm{min}\{n\mid \widetilde{i}_n\neq0\}$ be in $\widetilde{\mathbf{i}}$.
		If $\widetilde{n}=\widehat{n}$, then by {\rm (a)}, $\mathrm{deg}(L_{n-\frac{1}{2}+u}v_{\widetilde{\mathbf{i}}}\kappa_{\widetilde{\mathbf{i}},c})
		=\widetilde{\mathbf{i}}-\epsilon_{\widehat{n}}$, we also get {\rm (b)} in this case.

		Consider the last case $\mathbf{w}(\widetilde{\mathbf{i}})=\mathbf{w}(\mathbf{i})= p\in\frac{1}{2}\Z$,  $\mathbf{d}(\widetilde{\mathbf{i}})=\mathbf{d}(\mathbf{i})$ and $\widetilde{n}>\widehat{n}$.
		Then by Lemma \ref{lemm3231}, it is easy to check  that $\mathbf{w}(L_{n-\frac{1}{2}+u}v_{\widetilde{\mathbf{i}}}\kappa_{\widetilde{\mathbf{i}},c})<p-n+\frac{1}{2}
		=\mathbf{w}(\mathbf{i}-\epsilon_{\widehat{n}})$.
		This completes the proof of (b).
		
		{\rm (2)} The proof of {\rm (2)} is similar to {\rm (1)}. For readability, we still present  the
		complete proof processes.
		
		{\rm (a)} We write  $G_{u+\frac{n-1}{2}}v_{\mathbf{i}}\kappa_{\mathbf{i},c}$ as  \eqref{113.1222}. Since the action of $T_u$ on $M$ is injective, we note  that  the only way to give
		the term $v_{\mathbf{i}-\epsilon_{\widehat{n}}}\kappa_{\mathbf{i},c}$ is to commute $G_{u+\frac{n-1}{2}}$ with   $G_{-\frac{n-1}{2}}$, which shows $\mathbf{i}-\epsilon_{\widehat{n}}\in\mathrm{supp}
		\big(G_{u+\frac{n-1}{2}}v_{\mathbf{i}}\kappa_{\mathbf{i},c}\big)$.
		Combining this with Lemma \ref{lemm3231}, we deduce   $\mathrm{deg}(G_{u+\frac{n-1}{2}}v_{\mathbf{i}}\kappa_{\mathbf{i},c})=\mathbf{i}-\epsilon_{\widehat{n}}$.
		
		{\rm (b)}
		We first consider   $\mathbf{w}(\widetilde{\mathbf{i}}) < \mathbf{w}(\mathbf{i})$. According to   Lemma \ref{lemm3231},   we have
		$$\mathbf{w}(G_{u+\frac{n-1}{2}}v_{\widetilde{\mathbf{i}}}\kappa_{\widetilde{\mathbf{i}},c})
		\leq \mathbf{w}(\widetilde{\mathbf{i}})-\frac{n-1}{2}<\mathbf{w}(\mathbf{i}-\epsilon_{\widehat{n}})
		=\mathbf{w}(\mathbf{i})-\frac{n-1}{2}.$$
		So, {\rm (b)} holds in this case.
		
		Now consider $\mathbf{w}(\widetilde{\mathbf{i}})= \mathbf{w}(\mathbf{i})=p\in\frac{1}{2}\Z$ and $\mathbf{d}(\widetilde{\mathbf{i}})< \mathbf{d}(\mathbf{i})$.
		If  there exists an element
		$\mathbf{j}\in\mathrm{supp}
		\big(G_{u+\frac{n-1}{2}}w_{\widetilde{\mathbf{i}}}\kappa_{\widetilde{\mathbf{i}},c}\big)$  such that $\mathbf{d}(\mathbf{j})< \mathbf{d}(\widetilde{\mathbf{i}})$, then
		$$\mathbf{d}(\mathbf{j})< \mathbf{d}(\widetilde{\mathbf{i}})\leq \mathbf{d}(\mathbf{i})-1=\mathbf{d}(\mathbf{i}-\epsilon_{\widehat{n}}),$$
		which implies $\mathbf{j}\neq  \mathbf{i}-\epsilon_{\widehat{n}}$.
		If there exists $\mathbf{j}\in\mathrm{supp}
		\big(G_{u+\frac{n-1}{2}}w_{\widetilde{\mathbf{i}}}\kappa_{\widetilde{\mathbf{i}},c}\big)$    such that $\mathbf{d}(\mathbf{j})=\mathbf{d}(\widetilde{\mathbf{i}})$, then
		such $\mathbf{j}$ can only be given by commuting $G_{u+\frac{n-1}{2}}$ with some $G_{-p},p\in\frac{1}{2}\N$, where $p>\frac{n-1}{2}+u$.
		Thus we get
		$$\mathbf{w}(\mathbf{j})=\mathbf{w}(\widetilde{\mathbf{i}})-\frac{n-1}{2}-u < \mathbf{w}\widetilde{(\mathbf{i}})-\frac{n-1}{2}= \mathbf{w}(\mathbf{i})-\frac{n-1}{2}=\mathbf{w}(\mathbf{i}-\epsilon_{\widehat{n}}),$$
		which   shows $\mathbf{j}\neq \mathbf{i}-\epsilon_{\widehat{n}}$. So,
		{\rm (b)} also holds in this case.
		Denote $\widetilde{n}=\mathrm{min}\{n\mid \widetilde{i}_n\neq0\}$.
		If $\widetilde{n}=\widehat{n}$, then by {\rm (a)}, $\mathrm{deg}(G_{u+\frac{n-1}{2}}v_{\widetilde{\mathbf{i}}}\kappa_{\widetilde{\mathbf{i}},c})
		=\widetilde{\mathbf{i}}-\epsilon_{\widehat{n}}$, we also have {\rm (b)} in this case.

		At last, we   consider   $\mathbf{w}(\widetilde{\mathbf{i}})=\mathbf{w}(\mathbf{i})= p\in\frac{1}{2}\Z$,  $\mathbf{d}(\widetilde{\mathbf{i}})=\mathbf{d}(\mathbf{i})$ and $\widetilde{n}>\widehat{n}$.
		Then based on  Lemma \ref{lemm3231}, we can deduce that $\mathbf{w}(G_{u+\frac{n-1}{2}}v_{\widetilde{\mathbf{i}}}\kappa_{\widetilde{\mathbf{i}},c})<p-\frac{n-1}{2}
		=\mathbf{w}(\mathbf{i}-\epsilon_{\widehat{n}})$.
		We complete   the proof of (b).
	\end{proof}
	
	As an immediate consequence of Lemma  \ref{lemm33}  we  have:
	\begin{coro}\label{3344}
		Let $ c\in\C$. Assume that $M$ is a simple $\b$-module  and there exists $u\in\frac{1}{2}+\Z_+$ such that $M$ satisfies the conditions  {\rm (i)} and {\rm (ii)}.
		Let $0\neq w\in\mathrm{Ind}_{\b,c}(M)$ and $\mathrm{deg}(w)=w_{\mathbf{i}}$ for $\mathbf{i}\in\mathbf{H}$. Write  $\widehat{n}=\mathrm{min}\{n\mid i_n\neq0\}>0$.
		\begin{itemize}
			\item[{\rm (1)}]  If $\widehat{n}=2n-1$ for some $n\in\Z_+$, then   $L_{u+n-\frac{1}{2}}w\neq0$;
			\item[{\rm (2)}]
			If $\widehat{n}=2n$ for some $n\in\Z_+$, then $G_{u+\frac{n-1}{2}}w\neq0$.
		\end{itemize}
	\end{coro}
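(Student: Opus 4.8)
The plan is to expand $w$ in the PBW-type basis \eqref{def2.1}, writing $w=\sum_{\widetilde{\mathbf{i}}\in\mathrm{supp}(w)}v_{\widetilde{\mathbf{i}}}\kappa_{\widetilde{\mathbf{i}},c}$, then apply the operator dictated by the parity of $\widehat{n}$ and keep track only of the single ``extremal'' coordinate $\mathbf{i}-\epsilon_{\widehat{n}}$. Recall that $\mathrm{deg}(w)=w_{\mathbf{i}}$ means precisely that $\mathbf{i}$ is the $\succ$-maximum of $\mathrm{supp}(w)$, so every other $\widetilde{\mathbf{i}}\in\mathrm{supp}(w)$ satisfies $\mathbf{i}\succ\widetilde{\mathbf{i}}$. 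The whole point is then that the chosen operator produces the basis vector $v_{\mathbf{i}-\epsilon_{\widehat{n}}}$ from the leading term of $w$ and from nothing else, so no cancellation is possible there.

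In case (1), where $\widehat{n}=2n-1$, I would act by $L_{u+n-\frac{1}{2}}$, obtaining
$$L_{u+n-\frac{1}{2}}w=\sum_{\widetilde{\mathbf{i}}\in\mathrm{supp}(w)}L_{u+n-\frac{1}{2}}v_{\widetilde{\mathbf{i}}}\kappa_{\widetilde{\mathbf{i}},c}.$$
By Lemma \ref{lemm33}(1)(a) the summand with $\widetilde{\mathbf{i}}=\mathbf{i}$ has degree $\mathbf{i}-\epsilon_{\widehat{n}}$, so $v_{\mathbf{i}-\epsilon_{\widehat{n}}}$ occurs there with a nonzero coefficient (this is where the injectivity of $T_u$ is used). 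By Lemma \ref{lemm33}(1)(b), for every $\widetilde{\mathbf{i}}$ with $\mathbf{i}\succ\widetilde{\mathbf{i}}$ the vector $v_{\mathbf{i}-\epsilon_{\widehat{n}}}$ does not occur in the expansion of $L_{u+n-\frac{1}{2}}v_{\widetilde{\mathbf{i}}}\kappa_{\widetilde{\mathbf{i}},c}$. Hence the total coefficient of $v_{\mathbf{i}-\epsilon_{\widehat{n}}}$ in $L_{u+n-\frac{1}{2}}w$ equals the nonzero contribution coming from the leading term alone, and therefore $L_{u+n-\frac{1}{2}}w\neq0$. Case (2), where $\widehat{n}=2n$, is handled in exactly the same way, applying $G_{u+\frac{n-1}{2}}$ and invoking parts (a) and (b) of Lemma \ref{lemm33}(2) in place of those of Lemma \ref{lemm33}(1).

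Since the statement is explicitly an immediate consequence of Lemma \ref{lemm33}, I do not expect a genuine obstacle here; the only delicate point — and it has already been absorbed into that lemma — is the impossibility of a cancellation at the extremal coordinate $v_{\mathbf{i}-\epsilon_{\widehat{n}}}$ between the image of the leading term $v_{\mathbf{i}}\kappa_{\mathbf{i},c}$ and the images of the strictly smaller terms of $w$. Part (a) guarantees that this coordinate is genuinely hit by the leading term, and part (b) guarantees that it is untouched by all the others, so together they immediately force $L_{u+n-\frac{1}{2}}w\neq0$ (respectively $G_{u+\frac{n-1}{2}}w\neq0$), completing the argument.
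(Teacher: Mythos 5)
Your proof is correct and is exactly the argument the paper intends: the paper states the corollary as an immediate consequence of Lemma \ref{lemm33}, and your expansion of $w$ over $\mathrm{supp}(w)$ together with parts (a) and (b) of that lemma (leading term hits the coordinate $\mathbf{i}-\epsilon_{\widehat{n}}$, all $\succ$-smaller terms miss it, so no cancellation) is precisely the intended reasoning.
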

	Now  we give the main statement of this section.
	\begin{theo}\label{th1}
		Let  $c\in\C$ and $M$ be a simple $\b$-module. Assume that  there exists $u\in\frac{1}{2}+\Z_+$ such that $M$ satisfies the conditions  {\rm (i)} and {\rm (ii)}.   Then we obtain that
		$\mathrm{Ind}_{\b,c}(M)$ is a simple $\T$-module.
	\end{theo}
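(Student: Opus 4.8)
The plan is to show that any nonzero submodule $W$ of $\mathrm{Ind}_{\b,c}(M)$ must contain $M$ (viewed as $1\otimes M\subset\mathrm{Ind}_{\b,c}(M)$), from which simplicity follows since $M$ generates $\mathrm{Ind}_{\b,c}(M)$ as a $\T$-module (by PBW) and $M$ is simple over $\b$. So let $0\neq W$ be a submodule and pick $0\neq w\in W$ with $\mathrm{deg}(w)=\mathbf{i}$ \emph{minimal} among all nonzero elements of $W$ with respect to the principal total order $\succ$. The goal is to prove $\mathbf{i}=\mathbf{0}$, i.e. $w\in U(\T_0)M\subset M$ up to the action of $T_r, G_p$ with nonpositive... more precisely that $\mathrm{deg}(w)=\mathbf 0$ forces $w\in M$.

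The key mechanism is Corollary \ref{3344}: if $\mathbf{i}\neq\mathbf 0$, write $\widehat n=\min\{k\mid i_k\neq 0\}>0$. If $\widehat n=2n-1$, apply $L_{u+n-\frac12}$; if $\widehat n=2n$, apply $G_{u+\frac{n-1}{2}}$. In either case Corollary \ref{3344} gives $0\neq X w\in W$ where $X$ is that operator, and Lemma \ref{lemm33}(a) together with (b) shows $\mathrm{deg}(Xw)=\mathbf{i}-\epsilon_{\widehat n}\prec\mathbf{i}$: part (a) identifies the degree of the leading term $X v_{\mathbf i}\kappa_{\mathbf i,c}$, and part (b) guarantees that no strictly lower ($\succ$-smaller) term $v_{\widetilde{\mathbf i}}\kappa_{\widetilde{\mathbf i},c}$ of $w$ can contribute anything at position $\mathbf i-\epsilon_{\widehat n}$ to cancel it. This contradicts minimality of $\mathbf i$. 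Hence $\mathbf i=\mathbf 0$, so $w=1\otimes\kappa$ for some $0\neq\kappa\in M$, i.e. $w\in M$.

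Now I would finish: $W\cap M$ is a nonzero $\b$-submodule of $M$ (it is stable under $\T_+=\b$ since $W$ is a $\T$-module and $\b M\subseteq M$ by Remark \ref{remar23111} and the relations — one needs that $L_m, T_r, G_p$ with positive index map $1\otimes M$ into itself, which holds because $\T_+$ acts on $M$), so by simplicity of $M$ as a $\b$-module we get $M\subseteq W$. Finally $U(\T)M=\mathrm{Ind}_{\b,c}(M)$ by the PBW description in \eqref{def2.1}, so $W=\mathrm{Ind}_{\b,c}(M)$, proving simplicity.

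The main obstacle is purely bookkeeping: one must be careful that when $\widehat n=2n-1$ with $n\in\Z_+$ the chosen operator $L_{u+n-\frac12}$ indeed has index $\geq u+\frac12$ (so it kills $M$ by Remark \ref{remar23111} and the commutator computations in Lemma \ref{lemm33} apply), and similarly $G_{u+\frac{n-1}{2}}$ has index $\geq u$ when $\widehat n=2n$; both are immediate from $n\geq 1$ and $u\in\frac12+\Z_+$. The genuinely delicate input — comparing $\mathbf{w}$, $\mathbf{d}$ and the reverse-lexicographic order across the cases $\mathbf{w}(\widetilde{\mathbf i})<\mathbf w(\mathbf i)$, $\mathbf{d}(\widetilde{\mathbf i})<\mathbf d(\mathbf i)$, and $\widetilde n>\widehat n$ — has already been absorbed into Lemma \ref{lemm33}(b), so the proof of the theorem itself is a short descent argument.
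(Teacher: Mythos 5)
Your proposal is correct and follows essentially the same route as the paper: the paper's proof is a one-line appeal to Corollary \ref{3344}, which produces a nonzero element of $U(\T)v\cap M$ for any $0\neq v$, and your minimal-degree descent (justified because the principal order is a well-order, or because $\mathbf{d}$ drops by $1$ at each step) is just an explicit unwinding of that same mechanism, with the same final step via simplicity of $M$ over $\b$ and the PBW description of $\mathrm{Ind}_{\b,c}(M)$.
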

	\begin{proof}
		According to  Corollary \ref{3344},  for any  $0\neq v\in\mathrm{Ind}_{\b,c}(M)$   we get   a nonzero element in
		$U(\T)v\cap M$, which implies the simplicity of $\mathrm{Ind}_{\b,c}(M)$.
		The theorem follows.
	\end{proof}

	\section{Characterization of simple restricted $\T$-modules}
	In this section,  a characterization under certain conditions for simple restricted  modules over the twisted $N=2$ superconformal  algebra  is presented.
	For $m\in\Z, r,u\in\frac{1}{2}+\Z_+, p\in\frac{1}{2}\Z$,
	we denote   $$\T^{(u)}=\bigoplus_{p\geq u}\C G_{p}\oplus\bigoplus_{m\geq u+\frac{1}{2}}\C L_{m}\oplus\bigoplus_{r\geq u+1}\C T_r.$$
	We  show several equivalent descriptions for simple
	restricted $\T$-modules as follows.
	\begin{lemm}\label{th3}
		Let $m\in\Z, r\in\frac{1}{2}+\Z, p\in\frac{1}{2}\Z$. Let  $\V$ be a simple  $\T$-module.  Then the following conditions are equivalent:
		\begin{itemize}
			\item[{\rm (1)}] $\V$ is a restricted $\T$-module.

			\item[{\rm (2)}]   There exists $u\in\frac{1}{2}+\Z_+$ such that the actions of $L_{m},T_r,G_{p}$ for $m\geq u+\frac{1}{2}, r\geq u+1,  p\geq u$ on $\V$ are locally nilpotent.
			
			\item[{\rm (3)}]  There exists $u\in\frac{1}{2}+\Z_+$ such that the actions of $L_{m},T_r,G_{p}$ for $m\geq u+\frac{1}{2}, r\geq u+1,  p\geq u$ on $\V$ are locally finite.

			\item[{\rm (4)}]    There exists $u\in\frac{1}{2}+\Z_+$ such that  $\V$ is a  locally nilpotent $\T^{(u)}$-module.
			
			\item[{\rm (5)}]  There exists $u\in\frac{1}{2}+\Z_+$ such that  $\V$ is a  locally finite $\T^{(u)}$-module.
		\end{itemize}
	\end{lemm}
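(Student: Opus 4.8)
The plan is to run a cycle of implications, say $(1)\Rightarrow(4)\Rightarrow(2)\Rightarrow(3)\Rightarrow(1)$ together with the side loop $(4)\Rightarrow(5)\Rightarrow(3)$, so that all five statements become equivalent. Most of the arrows are formal and I would dispose of them first: $(4)\Rightarrow(2)$ and $(5)\Rightarrow(3)$ merely say that a locally nilpotent (resp.\ locally finite) action of the subalgebra $\T^{(u)}$ restricts to each individual element $L_m$ ($m\ge u+\frac12$), $T_r$ ($r\ge u+1$), $G_p$ ($p\ge u$); $(2)\Rightarrow(3)$ is the tautology that local nilpotence of an operator implies its local finiteness; and $(4)\Rightarrow(5)$ follows from the observation recalled in Section 2 that local nilpotence implies local finiteness over a \emph{finitely generated} Lie superalgebra (one checks from \eqref{def1.1} that $\T^{(u)}$ is finitely generated), or else from $(4)\Rightarrow(1)$ together with a truncation argument. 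So the real work sits in $(1)\Rightarrow(4)$ and in $(3)\Rightarrow(1)$.

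For $(1)\Rightarrow(4)$ I would use simplicity to localize restrictedness on a single vector. Pick $0\ne v_0\in\V$ and $k\in\N$ with $\T_p v_0=0$ for all $p>k$, and fix $u\in\frac12+\Z_+$ with $u>k$. Since $\bigoplus_{p>k}\T_p$ is an ideal of $\T_0\oplus\T_+$ annihilating $v_0$, the subspace $W_0:=U(\T_0\oplus\T_+)v_0$ is annihilated by $\bigoplus_{p>k}\T_p$, and by the PBW theorem and simplicity $\V=U(\T_-)W_0$. I would then filter $\V$ by $F_q\V:=\sum_{q'\le q}U(\T_-)_{-q'}W_0$ and set $\mathbf{w}(\xi):=\min\{q\mid\xi\in F_q\V\}$ for $0\ne\xi\in\V$, so $\mathbf{w}(\xi)\ge0$. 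The commutator computation from the proof of Lemma \ref{lemm3231} --- commute a positive-degree operator to the right through a monomial of $U(\T_-)$ until it either annihilates $W_0$ (degree $>k$) or reaches $W_0$ as an operator of degree $\le k$ --- then yields, for any homogeneous $X\in\T^{(u)}$ of degree $\varrho\ge u$, the bound $\mathbf{w}(X\xi)\le\mathbf{w}(\xi)-\varrho+k$: a strict drop by at least $u-k\ge\frac12$. Consequently, for each fixed $\xi$ any product of more than $2\,\mathbf{w}(\xi)$ elements of $\T^{(u)}$ annihilates $\xi$, i.e.\ $\V$ is a locally nilpotent $\T^{(u)}$-module. (For completeness, $(4)\Rightarrow(1)$ is straightforward: if $(\T^{(u)})^m v=0$, then any $L_N$, $T_N$ or $G_N$ with $N$ large can be written, using \eqref{def1.1}, as an $m$-fold iterated bracket of elements of $\T^{(u)}$, hence kills $v$; so $\V$ is restricted.)

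For $(3)\Rightarrow(1)$ I would pass to the even subalgebra $\T_{\bar0}$, which is the mirror Heisenberg--Virasoro algebra. By (3), the operators $L_m$ ($m\ge u+\frac12$) and $T_r$ ($r\ge u+1$) act locally finitely on $\V$, so the analogous characterization of restricted modules for $\T_{\bar0}$ (cf.\ \cite{LPXZ}, following the Virasoro prototype \cite{MZ}) gives that every $v\in\V$ is annihilated by $L_m$ for all sufficiently large $m$. Finally I would bootstrap to the odd generators: given $v$, choose $N$ so that $L_m$ annihilates each of $v$, $G_p v$ and $T_r v$ for all $m\ge N$ (admissible, since the previous step applies to every vector of $\V$), and read off from $[L_m,G_p]=(\frac m2-p)G_{m+p}$ and $[L_m,T_r]=-rT_{m+r}$ that $G_{m+p}v=T_{m+r}v=0$ for $m$ large. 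Hence $\V$ is a restricted $\T$-module.

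The step I expect to be the main obstacle is precisely the middle of $(3)\Rightarrow(1)$: passing from local \emph{finiteness} of the high $L_m$ to their eventual \emph{annihilation} of each vector. In contrast to $(1)\Rightarrow(4)$, a weight/degree count does not by itself exclude one of these operators acting invertibly, so this step genuinely exploits the graded structure of $\T_{\bar0}$ --- concretely, that the subalgebra generated by two consecutive large $L_m$'s is the corresponding positive part and that its finite-codimension ideals all contain some $\bigoplus_{m\ge N}\C L_m$. I would either cite the known result for the mirror Heisenberg--Virasoro algebra or reprove it along these lines.
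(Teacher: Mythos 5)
Your easy implications and your argument for $(1)\Rightarrow(4)$ are sound and essentially coincide with the paper's (the paper states the PBW/degree-drop step more tersely, but your filtration by $\mathbf{w}$ is exactly the mechanism behind it, cf.\ Lemma \ref{lemm3231}). The problem is $(3)\Rightarrow(1)$, and it is not the obstacle you flagged but one step earlier: your ``step A'' asserts that \emph{every} $v\in\V$ is annihilated by $L_m$ for $m$ large, as a consequence of the characterization of restricted modules over the even subalgebra $\T_{\bar 0}$. That characterization (in \cite{LPXZ}, \cite{TYZ}, and in the Virasoro prototype \cite{MZ}) is a statement about \emph{simple} $\T_{\bar 0}$-modules, and $\V$ is simple over $\T$ but in general not over $\T_{\bar 0}$, so it cannot be cited. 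What the eigenvector construction gives without any simplicity hypothesis is the existence of a \emph{single} nonzero $w'$ with $L_m w'=T_r w'=0$ for all large $m,r$. To upgrade ``one vector'' to ``every vector'' you must write $\V=U(\T)w'$ using simplicity over the full algebra $\T$ and push $L_m$ through a PBW monomial $u$; but $[L_m,u]w'$ then produces terms of the form $u'G_q w'$ with $q$ arbitrarily large, and these are not known to vanish at this stage. So step A cannot be established before the odd generators are controlled on $w'$, while your step B uses step A (applied to $G_0v$, $G_{\frac12}v$, $T_{\frac12}v$) to control the odd generators --- the two steps are circularly dependent.

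The paper's proof of $(3)\Rightarrow(1)$ resolves this by keeping the odd part in the picture from the start: take an eigenvector $w$ of $L_{u+\frac12}$, use local finiteness to build a finite-dimensional module $W'$ over $\T^{(u)}$ whose generating set already includes the $G$'s, extract a minimal relation $(L_m+a_1L_{m+1}+\cdots+a_nL_{m+n})W'=0$ and apply $\mathrm{ad}(L_m)$ to force $n=0$, hence $L_mW'=0$; then the brackets $[L_m,L_k]$, $[T_r,L_m]$, $[T_{m+r},G_{\frac12}]$ and $[L_{m+k},G_{\frac12}]$ kill $L$, $T$ \emph{and} $G$ of large index on $W'$. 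Only after all three families annihilate a single vector $w'$ does simplicity plus PBW spread the annihilation to every vector of $\V$. To repair your argument you would need to replace step A by this ``one good vector for the whole of $\T^{(u')}$'' statement and only then invoke simplicity; the pointwise bootstrap in your step B then becomes unnecessary.
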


	\begin{proof}
		
		It is easy to see that $(4)\Rightarrow(5)\Rightarrow(3)$  and $(4)\Rightarrow(2)\Rightarrow(3)$.

		First consider $(1)\Rightarrow(4)$.
		According to the definition of restricted module, for any nonzero element $w\in \V$, there exists
		$u^\prime\in\frac{1}{2}+\Z_+$ such that $L_{m}w=T_rw=G_pw=0$ for $r\geq u^\prime+1, m\geq u^\prime+\frac{1}{2}, p\geq u^\prime$. By the simplicity of $\V$, we obtain  $\V=U(\T)w$. Then it follows from the $\mathrm{PBW}$ Theorem  that $\V$ is a locally
		nilpotent  module over $\T^{(u)}$ for $u\geq u^\prime$.
		
		Now    consider $(3)\Rightarrow(1)$.
		By the definition of locally finite of (3),
		we can take a nonzero element $w\in \V$ such that $L_{u+\frac{1}{2}}w=\lambda w$ for some $\lambda\in \C$.
		Let  $m\in\Z$, $p\in\frac{1}{2}\Z, r\in\frac{1}{2}+\Z$ with $r\geq u+1, m\geq u+\frac{1}{2}, p\geq u$. We  denote the following three finite-dimensional vector spaces:
		\begin{eqnarray*}
			&&\M_L=\sum_{k\in\N}\C L_{u+\frac{1}{2}}^kL_{m}w={U}(\C L_{u+\frac{1}{2}})L_{m}w,
			\\&&\M_T=\sum_{k\in\N}\C L_{u+\frac{1}{2}}^kT_{r}w={U}(\C L_{u+\frac{1}{2}})T_{r}w,
			\\&&\M_G=\sum_{k\in\N}\C L_{u+\frac{1}{2}}^kG_{p}w={U}(\C L_{u+\frac{1}{2}})G_{p}w.
		\end{eqnarray*}
		From the definition of $\T$ and  $k\in\N$, one has
		\begin{eqnarray*}
			&&L_{m+k(u+\frac{1}{2})}w\in \M_L\Rightarrow L_{m+(k+1)(u+\frac{1}{2})}w\in \M_L,\quad \forall \ m\geq u+\frac{1}{2},
			\\&&T_{r+k(u+\frac{1}{2})}w\in \M_T\Rightarrow T_{r+(k+1)(u+\frac{1}{2})}w\in \M_T,\quad \forall \ r\geq u+1,
			\\&&G_{p+k(u+1)}w\in \M_G\Rightarrow G_{p+(k+1)(u+\frac{1}{2})}w\in \M_G,\quad \forall \ p\geq u.
		\end{eqnarray*}
		Using induction     on  $k\in\N$, we have
		$L_{m+k(u+\frac{1}{2})}w\in \M_L$, $T_{r+k(u+\frac{1}{2})}w\in \M_T$ and $G_{p+k(u+\frac{1}{2})}w\in \M_G$.
		In fact,  $\sum_{k\in\N}\C L_{m+k(u+\frac{1}{2})}w$, $\sum_{k\in\N}\C T_{r+k(u+\frac{1}{2})}w$ and $\sum_{k\in\N}\C G_{p+k(u+\frac{1}{2})}w$ are all
		finite-dimensional for    $r\geq u+1, m\geq u+\frac{1}{2}, p\geq u$. Hence,
		\begin{eqnarray*}
			&&\sum_{i\in\N}\C L_{u+\frac{1}{2}+i}w=\C L_{u+\frac{1}{2}}w+\sum_{m=u+\frac{3}{2}}^{2u+1}\big(\sum_{k\in\N}\C L_{m+k(u+\frac{1}{2})}w\big),
			\\&&\sum_{i\in\N}\C T_{u+1+i}w=\C T_{u+1}w+\sum_{r=u+2}^{2u+\frac{3}{2}}\big(\sum_{k\in\N}\C T_{r+k(u+\frac{1}{2})}w\big),
			\\&&\sum_{i\in\N}\C G_{u+i}w=\C G_{u}w+\sum_{p=u+1}^{2u+\frac{1}{2}}\big(\sum_{k\in\N}\C G_{p+k(u+\frac{1}{2})}w\big),
			\\&&\sum_{i\in\N}\C G_{u+\frac{1}{2}+i}w=\C G_{u+\frac{1}{2}}w+\sum_{p=u+\frac{3}{2}}^{2u+1}\big(\sum_{k\in\N}\C G_{p+k(u+\frac{1}{2})}w\big)
		\end{eqnarray*}
		are all finite-dimensional. Now we can choose $t\in\N$ such that
		\begin{eqnarray*}
			&&\sum_{i\in\N}\C L_{u+\frac{1}{2}+i}w=\sum_{i=0}^{t}\C L_{u+\frac{1}{2}+i}w,
			\\&&\sum_{i\in\N}\C T_{u+1+i}w=\sum_{i=0}^{t}\C T_{u+1+i}w,
			\\&&\sum_{i\in\N}\C G_{u+\frac{i}{2}}w=\sum_{i=0}^{t}\C G_{u+\frac{i}{2}}w.
		\end{eqnarray*}
		Denote  $$W^\prime=\sum_{x_0,x_1,\ldots,x_t,y_0,y_1,\ldots,y_t,z_0,z_1,\ldots,z_t\in\N}\C L_{u+\frac{1}{2}}^{x_0}L_{u+\frac{3}{2}}^{x_1}\cdots
		L_{u+\frac{1}{2}+t}^{x_t}T_{u}^{y_0}T_{u+1}^{y_1}\cdots
		T_{u+t}^{y_t} G_{u}^{z_0}G_{u+\frac{1}{2}}^{z_1}\cdots
		G_{u+\frac{t}{2}}^{z_t}  w.$$
		It is easy to check   that
		$W^\prime$ is a (finite-dimensional) $\T^{(u)}$-module.

		It follows that we can  choose
		a minimal $n\in\N$ such that
		\begin{equation}\label{lm3.3}
			(L_m+a_1L_{m+1}+\cdots + a_{n}L_{m+n})W^\prime=0
		\end{equation}
		for some $m\in\Z$ with $m\geq u+\frac{1}{2}$ and  $a_i\in \C,i=1,\ldots,n$.
		Applying $\mathrm{ad}(L_{m})$ to \eqref{lm3.3}, we have
		$$(a_1[L_{m},L_{m+1}]+\cdots +a_{n}[L_{m},L_{m+n}])W^\prime=0,$$
		which shows $n=0$. That is to say, $L_mW^\prime=0$  for some $m\geq u+\frac{1}{2}$.
		Therefore, for any $k\geq m+1,r\geq u+1$, one has
		\begin{eqnarray*}
			&&L_{k+m}W^\prime=\frac{1}{k-m}\big(L_kL_m-L_mL_k\big)W^\prime=0,
			\\&&T_{r+m}W^\prime=\frac{1}{r}\big(T_rL_m-L_mT_r\big)W^\prime=0.
		\end{eqnarray*}
		By using this, it is easy to get
		\begin{eqnarray*}
			&&G_{r+m+\frac{1}{2}}W^\prime=\big(T_{m+r}G_{\frac{1}{2}}-G_{\frac{1}{2}}T_{m+r}\big)W^\prime=0,
			\\&&G_{k+m+\frac{1}{2}}W^\prime=\frac{2}{m+k-1}\big(L_{m+k}G_{\frac{1}{2}}-G_{\frac{1}{2}}L_{m+k}\big)W^\prime=0,
		\end{eqnarray*}
		namely,  $G_{p+m+\frac{1}{2}}W^\prime=$ for   all $p>m$.
		
		Therefore,  there exists a nonzero element  $w^\prime\in\V$ such that $L_kw^\prime=T_rw^\prime=G_pw^\prime=0$  for all
		$k,r,p>2m+1$. Note that $\V=U(\T)w^\prime$. Based on the $\mathrm{PBW}$ theorem,  we check that
		each element of $\T$ can be written as a linear combination  of vectors
		$$\cdots G_{2m+\frac{1}{2}}^{i_{2m+\frac{1}{2}}}G_{2m+1}^{i_{2m+1}}\cdots T_{2m-\frac{1}{2}}^{j_{2m- \frac{1}{2}}}T_{2m+\frac{1}{2}}^{j_{2m+\frac{1}{2}}}\cdots L_{2m}^{l_{2m}}L_{2m+1}^{l_{2m+1}}w^\prime.$$
		Then for any $w \in \V$, there exists sufficiently large $t\in\N$ such that $L_mw=T_rw=G_pw=0$ for any $m,p,r>t$, which shows that $\V$ is a restricted $\T$-module.   We prove the lemma.

	\end{proof}
	The main results of this section can be given as follows.
	\begin{theo}\label{th2}
		Let $c\in\C$ and  $\V$ be a simple restricted  $\T$-module. Assume that there exists $s\in\frac{1}{2}+\Z_+$ such that the action of $T_{s}$ on   $\V$  is  injective.
		\begin{itemize}
			\item[{\rm (1)}] Then  there
			exists the smallest $t\in\frac{1}{2}+\Z_+$ with $t\geq s$ such that
			$$
			\M_{t}=\Big\{w\in \V\mid L_{\hat{t}+\frac{1}{2}}w=T_{\hat{t}+1}w=G_{\hat{t}}w=0, \forall\hat{t}>t-\frac{1}{2}\Big\}\neq0.$$ In particular, $M:=\M_{t}$ is a $\b$-module.
			\item[{\rm (2)}] Then   $\V$ can be  described by $M$ as follows. \begin{itemize}
				\item[{\rm (a)}] If $t=s$, then $M$ is a simple $ \b$-module, and
				$\V\cong\mathrm{Ind}_{\b,c}(M)$.
				
				\item[{\rm (b)}]   If $t>s$, suppose that the action of  $T_t$    on $M$ is injective.  Then $M$ is a simple $ \b$-module, and
				$\V\cong\mathrm{Ind}_{\b,c}(M)$.
			\end{itemize}
		\end{itemize}
	\end{theo}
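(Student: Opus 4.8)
The plan is to establish part (1) by a standard "going up to find the floor" argument and then reduce part (2) to Theorem \ref{th1}. For part (1), since $\V$ is restricted, every vector is annihilated by $L_m,T_r,G_p$ for $m,r,p$ large, so the sets $\M_{t'}$ are nondecreasing in $t'$ and their union is all of $\V$; hence some $\M_{t'}\neq 0$. I would take $t$ to be the smallest half-integer $\geq s$ with $\M_t\neq 0$. The fact that $M=\M_t$ is a $\b$-module is immediate from the bracket relations (\ref{def1.1}): if $w$ is killed by all $L_{\hat t+\frac12},T_{\hat t+1},G_{\hat t}$ with $\hat t>t-\frac12$, then applying any generator of $\b=\T_+$ to $w$ produces a vector with the same vanishing property, because raising an index can only keep it in, or push it deeper into, the annihilated range. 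One also needs the $\Z_2$-grading compatibility, which is automatic since all these operators are homogeneous. Finally I must check conditions (i) and (ii) of Section \ref{sec4} hold for $M$ with this $u=t$: condition (ii), $G_tM=0$, is built into the definition of $\M_t$; condition (i), injectivity of $T_t$ on $M$, is exactly the hypothesis in case (b), while in case (a) where $t=s$ it follows from the assumed injectivity of $T_s$ on all of $\V$ (restricting an injective map to a submodule stays injective).

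For part (2), once we know $M$ satisfies (i) and (ii) with $u=t$, Theorem \ref{th1} tells us that $\mathrm{Ind}_{\b,c}(M')$ is a simple $\T$-module \emph{provided} $M'$ is a simple $\b$-module. So the first sub-task is to prove $M$ itself is simple as a $\b$-module. I would argue by contradiction: if $N\subsetneq M$ is a nonzero proper $\b$-submodule, then by Lemma \ref{lemm31}(2) (applied with the vanishing $G_tN=0$ inherited from $M\subseteq\M_t$) and an argument paralleling the one in Lemma \ref{th3}, the $\T$-submodule $U(\T)N$ of $\V$ generated by $N$ would be proper — here one uses that elements of $\T_-$ strictly lower the "$T_t$-eigen-filtration" or degree, so they cannot map $N$ onto a vector outside a suitable proper subspace; by simplicity of $\V$ this forces $N=0$ or $N=M$, a contradiction. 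Granting simplicity of $M$, the universal property of induction gives a canonical nonzero $\T$-module homomorphism $\varphi:\mathrm{Ind}_{\b,c}(M)\to\V$ sending $1\otimes m\mapsto m$. Since $\mathrm{Ind}_{\b,c}(M)$ is simple by Theorem \ref{th1}, $\varphi$ is injective; since $\V$ is simple and $\varphi\neq 0$, it is surjective. Hence $\V\cong\mathrm{Ind}_{\b,c}(M)$, proving both (a) and (b).

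The main obstacle I anticipate is the simplicity of $M$ as a $\b$-module, and the closely related verification that $\M_t$ is \emph{exactly} where the induction functor "lands" — i.e. that no smaller $t$ works and that the top piece $M$ really captures all of $\V$ under induction. Proving $M$ simple requires showing that any $\b$-submodule $N$ generates, under $\T_-$, a submodule that either avoids $M\setminus N$ or is everything; this is where one genuinely uses the injectivity of $T_t$ (in case (b) the hypothesis is imposed precisely to make this work, whereas in case (a) minimality of $t=s$ plus injectivity of $T_s$ on $\V$ does the job), together with the degree/support machinery of Lemma \ref{lemm3231}, Lemma \ref{lemm33} and Corollary \ref{3344}. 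The surjectivity-of-$\varphi$ step and the restricted-module bookkeeping in part (1) are routine by comparison.
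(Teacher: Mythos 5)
Your part (1) is essentially the paper's argument: restrictedness gives $\M_{t'}\neq0$ for $t'$ large, injectivity of $T_s$ on $\V$ gives $\M_{t'}=0$ for $t'<s$, the bracket relations show $\M_t$ is $\b$-stable, and your identification of conditions (i) and (ii) of Section \ref{sec4} with $u=t$ (injectivity of $T_t$ being automatic in case (a) and hypothesized in case (b)) is exactly what is needed.

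The gap is in the order of deductions in part (2). You propose to prove \emph{first} that $M$ is a simple $\b$-module and only then invoke Theorem \ref{th1}, but the argument you sketch for that step does not work as stated. Given a nonzero proper $\b$-submodule $N\subsetneq M$, simplicity of $\V$ forces $U(\T)N=\V$ (not ``$U(\T)N$ proper''), so the contradiction must come from showing $U(\T)N\cap M=N$; and your appeal to ``elements of $\T_-$ strictly lower the degree'' has no meaning inside $\V$, which carries no grading or filtration a priori --- the degree and support machinery of Lemmas \ref{lemm3231} and \ref{lemm33} lives on induced modules of the form $\mathrm{Ind}_{\b,c}(\cdot)$, not on $\V$. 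Making this precise would essentially require first proving that a canonical map from an induced module onto $\V$ is injective, which is exactly the paper's route taken in the opposite order. The paper constructs the epimorphism $\Phi:\mathrm{Ind}_{\b,c}(M)\rightarrow\V$ using only that $\V$ is simple and hence generated by $M$ (no simplicity of $M$ is needed for this); it then shows $\ker\Phi=0$ by picking an element of $\ker\Phi\setminus(1\otimes M)$ of minimal degree and running the argument of Lemma \ref{lemm33}, which is where the injectivity of $T_t$ on $M$ enters; and only at the end does it deduce that $M$ is simple, from the standard fact that a proper nonzero $\b$-submodule $N$ of $M$ would generate the proper nonzero $\T$-submodule $U(\T)(1\otimes N)$ of the now-simple module $\mathrm{Ind}_{\b,c}(M)\cong\V$. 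So reverse your two steps: injectivity of the canonical map is the real content, and simplicity of $M$ is its corollary, not its prerequisite.
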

	\begin{proof}

		(1) We note that there exists $s\in\frac{1}{2}+\Z_+$ such that $T_{s}$  acts injectively on   $\V$.
		For   $t^\prime\in\frac{1}{2}+\Z_+$,  we  define the following   vector space
		\begin{eqnarray*}
			\M_{t^\prime}=\Big\{w\in \V\mid L_{\hat{t}+\frac{1}{2}}w=T_{\hat{t}+1}w=G_{\hat{t}}w=0, \forall \hat{t}>t^\prime-\frac{1}{2}\Big\}.
		\end{eqnarray*}
		By the definition of restricted modules, we see that $\M_{t^\prime}\neq0$ for sufficiently large $t^\prime\geq s$.
		Since    $T_s$ acts injectively on   $\V$, we conclude that $\M_{t^\prime}=0$ for all $t^\prime<s$.
		Thus we can find the smallest   $t\in\frac{1}{2}+\Z_+$ such that $M:= \M_t\neq0$.
		\begin{clai}
			For $s_1,s_2\in\{\frac{1}{2},\ldots,t-\frac{1}{2}\}$ with $s_1+s_2=t$, we obtain $G_{s_1}M\neq0$ and $G_{s_2}M\neq0$.
		\end{clai}
		Suppose $G_{s_1}M=0$ or $G_{s_2}M=0$. For any $t\in\frac{1}{2}+\Z_+$, we have  $$T_tM=\frac{(-1)^{2s_1+1}}{s_1-s_2}(G_{s_1}G_{s_2}+G_{s_2}G_{s_1})M=0,$$ which shows  $M=0$. This contradicts with  $M\neq0$. The claim holds.
		Choosing $q\in\frac{1}{2}\Z_+,w\in M$,  it is easy to see that
		\begin{eqnarray*}
			&&
			G_{\hat{t}}(G_{q}w)=
			\left\{\begin{array}{llll}
				-2L_{\hat{t}+q}w=0&\mbox{if}\
				\hat{t}+q\in\N,\\[4pt]
				(\hat{t}-q)T_{\hat{t}+q}w=0&\mbox{if}\
				\hat{t}+q\in\frac{1}{2}+\N,
			\end{array}\right.
			\\&&T_{\hat{t}+1}(G_{q}w)=G_{\hat{t}+1+q}w=0,
			\\&&L_{\hat{t}+\frac{1}{2}}(G_{q}w)=(\frac{\hat{t}}{2}+\frac{1}{4}-q)G_{\hat{t}+\frac{1}{2}+q}w=0.
		\end{eqnarray*}
		This gives  $G_{q}w\in M$ for all $q\in \frac{1}{2}\Z_+$.  By the similar method, we can deduce
		$
		T_{p}w, L_{p+\frac{1}{2}}w
		$ are
		all in $M$ for all $p\in \frac{1}{2}+\N$. So, $M$ is a  $\b$-module.

		(2)  (a)
		Since $\V$ is simple
		and generated by $M$,  we know that  there exists a canonical $\T$-modules  epimorphism
		$$\Phi:\mathrm{Ind}_{\b,c}(M) \rightarrow \V, \quad \Phi(1\otimes v)=v,\quad \forall  v\in M.$$
		So, we need only to prove that $\Phi$ is   injective, namely, $\Phi$ as the canonical map is bijective.  Let $F=\mathrm{ker}(\Phi)$. Clearly, $F\cap (1\otimes M)=0$. Suppose	$F\neq0$, we can choose $0\neq w\in F\setminus (1\otimes M)$ such that $\mathrm{deg}(w)=\mathbf{i}$ is minimal possible.
		Observe    that $F$ is a $\T$-submodule of $\mathrm{Ind}_{\b,c}(M)$.  By the definition of $M$, it is clear  that the action of  $T_{t}$  on $M$ is injective.
		Using  an identical process in Lemma \ref{lemm33}, we   obtain a new vector $\eta\in F$  with $\mathrm{deg}(\eta)\prec\mathbf{i}$, which  shows a contradiction. This forces $F=0$,
		i.e., $\V\cong \mathrm{Ind}_{\b,c}(M)$. From the property of induced modules, we see that $M$ is   simple.
		
		By the similar method used in (a), we have (b). This completes  the proof.
	\end{proof}

	\section{Examples}
	In this section, we show some examples of new simple restricted (non-weight) modules  for the twisted  $N=2$ superconformal algebra.
	\subsection{Generalized Whittaker modules}\label{ex789901}
	Denote $$\mathfrak{T}=\bigoplus_{m\in\Z_+}L_m\oplus\bigoplus_{r\in\frac{1}{2}+\Z_+}
	T_r\oplus\bigoplus_{p\in\frac{1}{2}(\Z_+\setminus\{1\})}G_p.$$
	Note that $\b=\mathfrak{T}\oplus \C T_{\frac{1}{2}}\oplus \C G_{\frac{1}{2}}$. Let  $\phi:\mathfrak{T}\rightarrow\C$ is a non-trivial Lie superalgebra homomorphism and let $\phi(G_{\frac{3}{2}})=\phi(L_2)=0$.  Then we have  $\phi(L_m)=\phi(T_{r})=\phi(G_p)=0$ for $m\geq2,r\geq\frac{5}{2},p\geq\frac{3}{2}$. Let $\mathfrak{t}_\phi=\C v_{\bar0}\oplus\C v_{\bar1}$ be  a $2$-dimensional vector space with
	$$xv_{\bar0}=\phi(x)v_{\bar0}, \ v_{\bar1}=G_{\frac{1}{2}}v_{\bar0}, \  Cv_{\bar0}=cv_{\bar0},\ Cv_{\bar1}=cv_{\bar1}$$  for all $x\in \mathfrak{T}.$
	Clearly, if $\phi(T_{\frac{3}{2}})\neq0$,   $\mathfrak{t}_{\phi}$ is a simple $\mathfrak{T}$-module and  $\mathrm{dim}(\mathfrak{t}_{\phi})=2$.
	For $m\in\Z, p\in\frac{1}{2}\Z, r\in\frac{1}{2}+\Z$, we denote
	$$\mathfrak{p}=\bigoplus_{m\geq1}\C L_m\oplus\bigoplus_{p>\frac{1}{2}}\C G_p\oplus\bigoplus_{r>0}\C T_r\oplus\C C
	\ \mathrm{and} \   \mathfrak{b}=\p\oplus\C G_\frac{1}{2}.$$
	Now we consider the induced module
	$$W_\phi=U(\p)\otimes_{U(\mathfrak{T})} \mathfrak{t}_\phi=\C[T_{\frac{1}{2}}]v_{\bar0}\oplus \C[T_{\frac{1}{2}}](G_{\frac{1}{2}}v_{\bar0}).$$
	It is easy to check that $W_\phi$ is a simple $\p$-module if $\phi(T_{\frac{3}{2}})\neq0$.
	Let $V_{\phi}=U(\b)\otimes_{U(\p)}W_\phi$.
	Then the simple induced $\T$-modules $\mathrm{Ind}_{\b,c}(V_{\phi})$  in Theorem \ref{th1}  are  generalized Whittaker modules. In fact,   if
	$\phi(T_{\frac{3}{2}})=0$, then $U(\T)(\C[T_{\frac{1}{2}}]v_{\bar0})$ and $U(\T)(\C[T_{\frac{1}{2}}](G_{\frac{1}{2}}v_{\bar0}))$ are nonzero proper submodules of $\mathrm{Ind}_{\b,c}(V_\phi)$.

	\subsection{High order Whittaker  modules}\label{ex78990}
	Let $\phi_s$ be a  non-trivial Lie superalgebra homomorphism $\phi_s:\T^{(s)}\rightarrow\C$ for  $s\in\frac{1}{2}+\Z_+$ and let $\phi_s(L_{2s+1})=0$.
	Then we get   $\phi_s(L_m)=\phi_s(G_p)=\phi_s(T_{r})=0$ for $m\geq2s+1,p\geq2s+\frac{1}{2}, r\geq 2s+\frac{3}{2}$.
	Suppose that $\mathfrak{t}_{\phi_s}=\C v_{\bar0}\oplus\C v_{\bar1}$ is a $2$-dimensional vector space with
	$$xv_{\bar0}=\phi(x)v_{\bar0},\ v_{\bar1}=G_{\frac{1}{2}}v_{\bar0}, \ Cv_{\bar0}=cv_{\bar0},\ Cv_{\bar1}=cv_{\bar1}$$  for all $x\in \T^{(s)}.$
	If $\phi_s(T_{2s+\frac{1}{2}})\neq0$,  $\mathfrak{t}_{\phi_s}$ is a simple $\T^{(s)}$-module and  $\mathrm{dim}(\mathfrak{t}_{\phi_s})=2$.
	Consider the induced module
	$$W_{\phi_s}=U(\mathfrak{p})\otimes_{U(\T^{(s)})} \mathfrak{t}_{\phi_s}.$$
	Then $W_{\phi_s}$ is a simple $\p$-module if $\phi_s(T_{2s+\frac{1}{2}})\neq0$ and  $\mathrm{dim}(W_{\phi_s})=2$. Denote $V_{\phi_s}=U(\b)\otimes_{U(\p)}W_{\phi_s}$.
	The corresponding simple  $\T$-modules $\mathrm{Ind}_{\b,c}(V_{\phi_s})$  in Theorem \ref{th1}  are exactly the high order Whittaker modules.

	\subsection{$\T$-modules from simple induced   $\b\oplus\T_0$-modules}
	Denote $B=\b\oplus\T_0$. Suppose that  $M$ is a simple $\b$-module  and  satisfies   the   conditions (i) and (ii) appeared in Section \ref{sec4}.
	Then we have induced  $B$-modules $\mathrm{Ind}_{\b,c}^{B}(M)$.
	\begin{prop}
		The induced $B$-module $\mathrm{Ind}_{\b,c}^{B}(M)$ is simple.
	\end{prop}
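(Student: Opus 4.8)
The plan is to embed $\mathrm{Ind}^{B}_{\b,c}(M)$ into the simple $\T$-module $\mathrm{Ind}_{\b,c}(M)$ furnished by Theorem \ref{th1}, and then to extract $B$-simplicity from $\T$-simplicity using the natural $\frac{1}{2}\Z$-grading carried by the PBW realization of the induced module.

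First I would record the structural facts. Since $B=\T_0\oplus\T_+=\T_0\oplus\b$ is a Lie sub-superalgebra of $\T$ and $\T=(\T_-\oplus\T_0)\oplus\T_+$ as vector spaces, the PBW theorem gives $U(\T)=U(\T_-\oplus\T_0)\,U(\b)$ and $U(B)=U(\T_0)\,U(\b)$. Hence the canonical morphism $\mathrm{Ind}^{B}_{\b,c}(M)=U(B)\otimes_{U(\b)}M\to\mathrm{Ind}_{\b,c}(M)=U(\T)\otimes_{U(\b)}M$ is injective and identifies $\mathrm{Ind}^{B}_{\b,c}(M)$ with the $B$-submodule $U(\T_0)M$. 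In the notation of Section \ref{sec4}, using $L_{-m}=(-1)^m(G_{-m/2})^2$ and $w_{k\epsilon_2}=G_0^{\,k}$, this submodule is exactly $\mathrm{span}\{w_{\mathbf i}\kappa_{\mathbf i,c}\mid \mathbf w(\mathbf i)=0\}$, because $\mathbf w(\mathbf i)=0$ forces $\mathbf i=k\epsilon_2$ for some $k\in\N$. Placing $M$ in degree $0$, the $\frac{1}{2}\Z$-grading of $U(\T_-\oplus\T_0)$ makes $\mathrm{Ind}_{\b,c}(M)=\bigoplus_{p\in\frac{1}{2}\N}\mathrm{Ind}^{[p]}$, where $\mathrm{Ind}^{[p]}=\mathrm{span}\{w_{\mathbf i}\kappa_{\mathbf i,c}\mid \mathbf w(\mathbf i)=p\}$ is the homogeneous component of degree $-p$; in particular $\mathrm{Ind}^{[0]}=\mathrm{Ind}^{B}_{\b,c}(M)$. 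The point I would stress here is that an element of $\mathrm{Ind}^{[0]}$ is an $M$-combination of the $G_0^{\,k}$ alone, involving no $\T_+$-factors, so left multiplication by any $u\in U(\T_-)_{-p}$ produces $u\,G_0^{\,k}\in U(\T_-\oplus\T_0)$ again with no $\T_+$-reordering correction; as $U(\T_-)_{-p}$ is $\frac{1}{2}\Z$-homogeneous of degree $-p$, this shows $U(\T_-)_{-p}\cdot\mathrm{Ind}^{[0]}\subseteq\mathrm{Ind}^{[p]}$.

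Next, given a nonzero $B$-submodule $N\subseteq\mathrm{Ind}^{B}_{\b,c}(M)$, I would pass to the $\T$-submodule it generates. Because $N$ is $B$-stable and $U(\T)=U(\T_-)\,U(B)$, one has $U(\T)N=U(\T_-)N$, a nonzero $\T$-submodule of $\mathrm{Ind}_{\b,c}(M)$, hence all of it by Theorem \ref{th1}. Combining this with $U(\T_-)_{-p}N\subseteq\mathrm{Ind}^{[p]}$, the equality $\bigoplus_{p}\mathrm{Ind}^{[p]}=\mathrm{Ind}_{\b,c}(M)=U(\T_-)N=\bigoplus_{p}U(\T_-)_{-p}N$ becomes an equality of graded subspaces of $\bigoplus_{p}\mathrm{Ind}^{[p]}$ in which the $p$-th summand on the right lies in the $p$-th summand on the left; projecting onto the components of degree $0$ forces $N=U(\T_-)_0N=\mathrm{Ind}^{[0]}=\mathrm{Ind}^{B}_{\b,c}(M)$. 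Thus $\mathrm{Ind}^{B}_{\b,c}(M)$ has no proper nonzero $B$-submodule.

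The routine part is the PBW bookkeeping; the step that deserves care is the identification $\mathrm{Ind}^{B}_{\b,c}(M)=\mathrm{Ind}^{[0]}$ together with $U(\T_-)_{-p}\cdot\mathrm{Ind}^{[0]}\subseteq\mathrm{Ind}^{[p]}$, i.e. checking that $\mathrm{Ind}^{B}_{\b,c}(M)$ really is the bottom $\mathbf w$-layer and is shifted faithfully by $U(\T_-)$. This is precisely where the explicit basis $w_{\mathbf i}$ and the equivalence $\mathbf w(\mathbf i)=0\Leftrightarrow \mathbf i=k\epsilon_2$ from Section \ref{sec4} are used. No degree-lowering argument in the spirit of Lemma \ref{lemm33} is needed, since the simplicity of $\mathrm{Ind}_{\b,c}(M)$ already does that work.
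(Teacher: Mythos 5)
Your proof is correct, but it takes a genuinely different route from the paper's. The paper argues directly inside $\mathrm{Ind}_{\b,c}^{B}(M)$: it writes an arbitrary element as $\sum_{k}G_0^{k}v_{k}$ with $v_k\in M$ and then reruns the degree-lowering mechanism of Lemma \ref{lemm33}(2) --- acting by $G_u$, whose bracket with $G_0$ is $uT_u$ and hence injective on $M$ --- to strip off the top $G_0$-power of any nonzero vector in a submodule until one lands in $M$; simplicity of $M$ over $\b$ then finishes. You instead treat the statement as a formal corollary of Theorem \ref{th1}: identify $\mathrm{Ind}_{\b,c}^{B}(M)$ with the $\mathbf{w}$-degree-zero layer $\mathrm{Ind}^{[0]}$ of $\mathrm{Ind}_{\b,c}(M)$ (correct, since $\mathbf{w}(\mathbf{i})=0$ forces $\mathbf{i}=k\epsilon_2$), note that a $B$-submodule $N$ generates $U(\T_-)N$ over $\T$, invoke simplicity of the big module, and project back to degree zero. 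The two steps you flag as needing care are exactly the right ones, and both check out: the canonical map $U(B)\otimes_{U(\b)}M\to U(\T)\otimes_{U(\b)}M$ is injective because $U(\T)$ is free over $U(B)$, and $U(\T_-)_{-p}\cdot\mathrm{Ind}^{[0]}\subseteq\mathrm{Ind}^{[p]}$ because reordering $uG_0^{k}$ takes place entirely inside $U(\T_-\oplus\T_0)$ and preserves the $\frac{1}{2}\Z$-degree. What your approach buys is that no new commutator combinatorics is needed --- the simplicity of $\mathrm{Ind}_{\b,c}^{B}(M)$ becomes a purely formal consequence of the simplicity of $\mathrm{Ind}_{\b,c}(M)$ together with the triangular PBW decomposition, and the argument would apply verbatim to any intermediate induction through a degree-zero subalgebra. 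What the paper's direct argument buys is logical independence: it reproves simplicity of the smaller module from scratch rather than routing through Theorem \ref{th1}, and it is the version that generalizes when one only knows injectivity of $T_u$ on $M$ without already having the big simplicity result in hand. One cosmetic remark: as in the paper's own proof, the identity $L_0=G_0^2$ holds only up to the central correction $\frac{c}{24}$ coming from $[G_0,G_0]=2L_0-\frac{1}{12}C$; this is harmless for your grading argument since both sides sit in degree zero.
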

	\begin{proof}
		By the $\mathrm{PBW}$ Theorem and $L_0=G_0^2$, for any $v\in \mathrm{Ind}_{\b,c}^{B}(M)\setminus M$, we can write $v$ in the form
		$$\sum_{i_1\in\N}G^{i_1}_0v_{i_1},$$
		where $v_{i_1}\in M$. Suppose that
		$\mathrm{supp}(v_{i_1})$ is the set of all $i_1$ with $v_{i_1}\neq0$, and $\mathrm{deg}(v_{i_1})$ is the maximal element of $\mathrm{supp}(v_{i_1})$
		with respect to reverse lexicographic order on $\Z$.  Using the similar method in Lemma \ref{lemm33} (2), we can check that $\mathrm{Ind}_{\b,c}^{B}(M)$ is a simple $B$-module.
	\end{proof}
	From the above setting, we obtain a class of simple induced  $\T$-modules $\mathrm{Ind}_{B,c}(\mathrm{Ind}_{\b,c}^{B}(M))$, where  $M$ is a simple $\b$-module  and  satisfies    (i) and (ii) in Section \ref{sec4}.

	\subsection{Whittaker modules}\label{ex7899012233}
	In \cite{ALTY},  the authors studied some simple non $
	\mathbb{Z}_2$-graded Whittaker modules for $N=1$ Ramond algebras, which provides irreducible modules for  orbifolds for $N=1$ Neveu-Schwarz vertex superalgebras.
	In this section, we will consider those similar Whittaker modules over   twisted $N=2$ superconformal algebras.  We first recall the definition of Whittaker modules.
	\begin{defi}\rm
		Let $c\in\C$. Assume that  $\phi: \mathfrak{b}\rightarrow \C$ is a  Lie superalgebra homomorphism with $\phi(G_{\frac{1}{2}})=0$ which will be called a {\em Whittaker function}.
		A $\T$-module $M$ is called a {\em Whittaker module} of type $(\phi,c)$ if
		\begin{itemize}
			\item[{\rm (1)}]
			$M$  is generated by a homogeneous vector $w$;
			\item[{\rm (2)}]
			$xw=\phi(x)w$ for any $x\in\mathfrak{b}$;
			\item[{\rm (3)}]  $Cw = cw$,
		\end{itemize}
		where $w$ is called a {\it Whittaker vector} of $M$.
	\end{defi}
	The following lemma can be obtained  immediately by  \eqref{def1.1}.
	\begin{lemm}\label{lemma3.2}
		Let $m\in\Z,r\in\frac{1}{2}+\Z,p\in\frac{1}{2}\Z$ and $\phi:\mathfrak{b}\rightarrow\C$ be a  Lie superalgebra homomorphism with $\phi(G_{\frac{1}{2}})=0$. Then
		we have $\phi(L_m)=\phi(T_r)=\phi(G_p)=0$
		for $r>\frac{1}{2},m,p>0$.
	\end{lemm}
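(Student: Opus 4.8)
The plan is to use the fact that the target $\C$ is an abelian Lie algebra, so that any Lie superalgebra homomorphism $\phi\colon\mathfrak{b}\rightarrow\C$ kills the derived subalgebra: $\phi([x,y])=0$ for all $x,y\in\mathfrak{b}$. Recalling that $\mathfrak{b}=\p\oplus\C G_{\frac{1}{2}}$ contains $L_m$ for $m\geq 1$, $G_p$ for $p\in\frac{1}{2}\Z$ with $p\geq\frac{1}{2}$, $T_r$ for $r\in\frac{1}{2}+\N$, and $C$, it then suffices to write a nonzero scalar multiple of each of $L_m$ $(m\geq 1)$, $T_r$ $(r\geq\frac{3}{2})$, $G_p$ $(p\geq 1)$ as a bracket of two elements of $\mathfrak{b}$; together with the hypothesis $\phi(G_{\frac{1}{2}})=0$ this gives all the claimed vanishing.

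Concretely, first I would treat the $L$'s: for $m\geq 1$ both $G_{\frac{1}{2}}$ and $G_{m-\frac{1}{2}}$ lie in $\mathfrak{b}$, and since $\frac{1}{2}+(m-\frac{1}{2})=m\in\Z$, relation \eqref{def1.1} gives $[G_{\frac{1}{2}},G_{m-\frac{1}{2}}]=-2L_m$ (the central term vanishes as $m\neq 0$), so $\phi(L_m)=-\frac{1}{2}\phi([G_{\frac{1}{2}},G_{m-\frac{1}{2}}])=0$. Next the $T$'s: for $r\in\frac{1}{2}+\Z$ with $r\geq\frac{3}{2}$ we have $G_{\frac{1}{2}},G_{r-\frac{1}{2}}\in\mathfrak{b}$ and $\frac{1}{2}+(r-\frac{1}{2})=r\in\frac{1}{2}+\Z$, so \eqref{def1.1} yields $[G_{\frac{1}{2}},G_{r-\frac{1}{2}}]=(1-r)T_r$ with $1-r\neq 0$, hence $\phi(T_r)=0$. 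Finally the $G$'s: for $p\geq 1$ we have $T_{\frac{1}{2}},G_{p-\frac{1}{2}}\in\mathfrak{b}$ and $[T_{\frac{1}{2}},G_{p-\frac{1}{2}}]=G_p$ by the relation $[T_r,G_p]=G_{r+p}$ in \eqref{def1.1}, so $\phi(G_p)=0$; combined with the given $\phi(G_{\frac{1}{2}})=0$ this covers every $p\in\frac{1}{2}\Z$ with $p>0$.

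There is essentially no obstacle here: the content is a one-line bracket computation, which matches the paper's assertion that the lemma is immediate from \eqref{def1.1}. The only points requiring (trivial) care are checking that each element used as a bracket factor genuinely lies in $\mathfrak{b}$ — i.e. the inequalities $m-\frac{1}{2}\geq\frac{1}{2}$, $r-\frac{1}{2}\geq 1$, $p-\frac{1}{2}\geq\frac{1}{2}$ under the stated ranges — and that the coefficients $-2$ and $1-r$ are nonzero so that one may divide; the hypothesis $\phi(G_{\frac{1}{2}})=0$ is needed only to start the $G$-case at $p=\frac{1}{2}$, the one element that cannot be realized as a bracket inside $\mathfrak{b}$.
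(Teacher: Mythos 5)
Your proof is correct and is exactly the argument the paper has in mind: the paper offers no written proof beyond the remark that the lemma "can be obtained immediately by \eqref{def1.1}", and your bracket computations $[G_{\frac{1}{2}},G_{m-\frac{1}{2}}]=-2L_m$, $[G_{\frac{1}{2}},G_{r-\frac{1}{2}}]=(1-r)T_r$, $[T_{\frac{1}{2}},G_{p-\frac{1}{2}}]=G_p$ (all with factors in $\mathfrak{b}$, all with nonzero coefficients) are precisely the intended verification.
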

	For a nontrivial Lie superalgebra homomorphism $\phi:\mathfrak{b}\rightarrow\C$ with $\phi(G_{\frac{1}{2}})=0$, we define $\C v_{\phi,c}$    as the one-dimensional
	$\mathfrak{b}\oplus\C C$-module given by $xv_{\phi,c}=\phi(x)v_{\phi,c}$  and $C v_{\phi,c}=c v_{\phi,c}$ for $x\in\mathfrak{b},c\in\C$. Then we have an induced   $\T$-modules $$W(\phi,c)=\mathrm{Ind}_{\mathfrak{b}\oplus\C C}^{\T}\C v_{\phi,c}=U(\T)\otimes_{U(\mathfrak{b}\oplus\C C)}\C v_{\phi,c}.$$
	
	\begin{lemm}\label{lemma633}
		Let $v_{\phi,c}\in W(\phi,c)$ be the Whittaker vector. For $v=uv_{\phi,c}\in W(\phi,c)$,
		we have
		$$(x\pm\phi(x))v=[x,u]v_{\phi,c}, \ \forall  x\in\mathfrak{b}.$$
	\end{lemm}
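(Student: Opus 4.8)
The plan is to prove the identity $(x\pm\phi(x))v=[x,u]v_{\phi,c}$ by unwinding the definition of $v=uv_{\phi,c}$ and using the fact that $v_{\phi,c}$ is a Whittaker vector, i.e. $x v_{\phi,c}=\phi(x)v_{\phi,c}$ for all $x\in\mathfrak{b}$. First I would write
$$x v = x(u v_{\phi,c}) = (xu)v_{\phi,c}.$$
Now in the universal enveloping algebra $U(\T)$ one has $xu = [x,u] + (-1)^{|x||u|} u x$ (the super-commutator convention used throughout the paper, see the $\mathcal{L}$-module axiom $x(yw)-(-1)^{|x||y|}y(xw)=[x,y]w$). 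Hence
$$x v = [x,u] v_{\phi,c} + (-1)^{|x||u|} u (x v_{\phi,c}) = [x,u]v_{\phi,c} + (-1)^{|x||u|}\phi(x)\, u v_{\phi,c} = [x,u]v_{\phi,c} + (-1)^{|x||u|}\phi(x) v.$$
Since $\phi$ is a Lie superalgebra homomorphism to $\C$ (concentrated in degree $\bar 0$), $\phi(x)=0$ unless $x$ is even; thus $(-1)^{|x||u|}\phi(x) = \phi(x)$ when $x$ is even, and the sign is irrelevant. For the odd part one must be slightly more careful: if $x$ is odd then $\phi(x)=0$ automatically by Lemma \ref{lemma3.2} (indeed $\phi(G_{\frac12})=0$ is assumed and $\phi(G_p)=0$ for $p>0$), so the identity $(x+\phi(x))v=[x,u]v_{\phi,c}$ reduces to $x v=[x,u]v_{\phi,c}$, which still holds — the $\pm$ in the statement is there precisely to absorb the parity sign $(-1)^{|x||u|}$ in the homogeneous cases.

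More precisely, I would argue by cases on the parity of $u$ (we may assume $u$ homogeneous, since $W(\phi,c)=U(\T)\otimes_{U(\mathfrak{b}\oplus\C C)}\C v_{\phi,c}$ is spanned by such elements and both sides of the claimed identity are linear in $u$). If $|x||u|$ is even, then $(-1)^{|x||u|}=1$ and the computation above gives $xv=[x,u]v_{\phi,c}+\phi(x)v$, i.e. $(x-\phi(x))v=[x,u]v_{\phi,c}$. If $|x||u|$ is odd — which forces $x$ odd, hence $\phi(x)=0$ — then $(-1)^{|x||u|}=-1$ and the computation gives $xv=[x,u]v_{\phi,c}-\phi(x)v=[x,u]v_{\phi,c}$, which is $(x+\phi(x))v=[x,u]v_{\phi,c}$ since $\phi(x)=0$. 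In all cases one of the two signs works, which is exactly the content of the stated $\pm$.

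The argument is essentially a one-line computation in $U(\T)$ together with the defining property of the Whittaker vector, so there is no substantial obstacle. The only point requiring a moment's attention is the bookkeeping of the super-sign $(-1)^{|x||u|}$ and the observation that $\phi$ kills all odd elements of $\mathfrak{b}$ (a consequence of $\phi(G_{\frac12})=0$ via Lemma \ref{lemma3.2}), which is what makes the $\pm$ statement uniformly correct rather than parity-dependent. I would present the proof as: expand $xv=(xu)v_{\phi,c}$, commute $x$ past $u$ picking up $[x,u]$ and the sign, apply $xv_{\phi,c}=\phi(x)v_{\phi,c}$, and then note that in the odd case the $\phi(x)$-term vanishes, so in either parity we may write the result as $(x\pm\phi(x))v=[x,u]v_{\phi,c}$.
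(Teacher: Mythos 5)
Your proposal is correct and follows essentially the same route as the paper: expand $xv=(xu)v_{\phi,c}$ via the super-commutator $xu=[x,u]+(-1)^{|x||u|}ux$, apply $xv_{\phi,c}=\phi(x)v_{\phi,c}$, and absorb the parity sign into the $\pm$ (noting $\phi$ vanishes on odd elements). Your version is in fact slightly more careful about the sign bookkeeping than the paper's one-line computation.
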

	\begin{proof}
		For $x\in\mathfrak{b}$, by Lemma \ref{lemma3.2} one has
		$$[x,u]v_{\phi,c}=xuv_{\phi,c}\pm uxv_{\phi,c}=xuv_{\phi,c}\pm\phi(x)uv_{\phi,c}
		=(x\pm\phi(x))uv_{\phi,c}=(x\pm\phi(x))v.$$	\end{proof}
	Using the similar calculation in  Theorem \ref{th1} and Lemma \ref{lemma633},  we conclude  that
	the induced $\T$-modules $W(\phi,c)$ are simple if $\phi(T_{\frac{1}{2}})\neq0$.
	\begin{remark}
		From the condition (i) of Theorem \ref{th1},  we see that  these  simple non $
		\mathbb{Z}_2$-graded Whittaker modules $W(\phi,c)$ for twisted $N=2$ superconformal algebras can not be obtained by our setting.
	\end{remark}

	\section*{Acknowledgements}
		H. Chen was supported by the National Natural Science Foundation of China (No. 12171129), Fujian Alliance of Mathematics (No. 2023SXLMMS05)   and Natural Science Foundation of Fujian (No. 2021J01862). Y. Su was supported by the National Natural Science Foundation of China (No. 11971350).  Y. Xiao is supported by the National Natural Science Foundation of China (No. 12401034) and the Natural Science Foundation of Shandong Province (No. ZR2023QA066). The authors would like to thank Prof. Haisheng Li for revision on first three sections.  Part of this work was done while   Chen and Xiao were visiting the Chern Institute of Mathematics, Tianjin, China. They also thank the institute and Prof. Chengming Bai for their warm hospitality and support.
	
	\small 
	\bigskip

	Haibo Chen
	\vspace{2pt}

	Department of Mathematics, Jimei University, Xiamen, Fujian 361021, China

	\vspace{2pt}
	hypo1025@jmu.edu.cn
	
	\bigskip

	Yucai Su
	\vspace{2pt}

	Department of Mathematics, Jimei University, Xiamen, Fujian 361021, China

	\vspace{2pt}
	yucaisu@jmu.edu.cn
	
	\bigskip

	Yukun Xiao
	\vspace{2pt}
	
	1. School of Mathematics and Statistics, Qingdao University, Qingdao, 266071, China.\\
	2. Qingdao International Mathematics Center, Qingdao, 266071, China.

	\vspace{2pt}
	ykxiao@qdu.edu.cn
	
	\bigskip
	
\end{document}